\documentclass{amsart}
%%%%%%%%%%%%%%%%%%%%%%%%%%%%%%%%%%%%%%%%%%%%%%%%%%%%%%%%%%%%%%%%%%%%%%%%%%%%%%%%%%%%%%%%%%%%%%%%%%%%%%%%%%%%%%%%%%%%%%%%%%%%%%%%%%%%%%%%%%%%%%%%%%%%%%%%%%%%%%%%%%%%%%%%%%%%%%%%%%%%%%%%%%%%%%%%%%%%%%%%%%%%%%%%%%%%%%%%%%%%%%%%%%%%%%%%%%%%%%%%%%%%%%%%%%%%
\usepackage{amsmath}
\usepackage{amsfonts}

\setcounter{MaxMatrixCols}{10}
%TCIDATA{OutputFilter=LATEX.DLL}
%TCIDATA{Version=5.50.0.2890}
%TCIDATA{<META NAME="SaveForMode" CONTENT="1">}
%TCIDATA{BibliographyScheme=Manual}
%TCIDATA{Created=Monday, September 09, 2024 17:41:20}
%TCIDATA{LastRevised=Monday, July 14, 2025 09:45:28}
%TCIDATA{<META NAME="GraphicsSave" CONTENT="32">}
%TCIDATA{<META NAME="DocumentShell" CONTENT="Articles\SW\AMS Journal Article">}
%TCIDATA{Language=American English}
%TCIDATA{CSTFile=amsartci.cst}

\newtheorem{theorem}{Theorem}
\theoremstyle{plain}

\newtheorem{axiom}{Axiom}

\newtheorem{conjecture}{Conjecture}
\newtheorem{corollary}{Corollary}

\newtheorem{definition}{Definition}
\newtheorem{example}{Example}
\newtheorem{exercise}{Exercise}
\newtheorem{lemma}{Lemma}

\newtheorem{proposition}{Proposition}
\newtheorem{remark}{Remark}

\numberwithin{equation}{section}
% Macros for Scientific Word 2.5 documents saved with the LaTeX filter.
%Copyright (C) 1994-95 TCI Software Research, Inc.
\typeout{TCILATEX Macros for Scientific Word 2.5 <22 Dec 95>.}
\typeout{NOTICE:  This macro file is NOT proprietary and may be 
freely copied and distributed.}
\makeatletter
%
%%%%%%%%%%%%%%%%%%%%%%
% macros for time
\newcount\@hour\newcount\@minute\chardef\@x10\chardef\@xv60
\def\tcitime{
\def\@time{%
  \@minute\time\@hour\@minute\divide\@hour\@xv
  \ifnum\@hour<\@x 0\fi\the\@hour:%
  \multiply\@hour\@xv\advance\@minute-\@hour
  \ifnum\@minute<\@x 0\fi\the\@minute
  }}%

%%%%%%%%%%%%%%%%%%%%%%
% macro for hyperref
\@ifundefined{hyperref}{}{}

% macro for external program call
\@ifundefined{qExtProgCall}{\def\qExtProgCall#1#2#3#4#5#6{\relax}}{}
%%%%%%%%%%%%%%%%%%%%%%
%
% macros for graphics
%
%
%
\def\QCTOpt[#1]#2{%
  \def\QCTOptB{#1}
  \def\QCTOptA{#2}
}
\def\QCTNOpt#1{%
  \def\QCTOptA{#1}
  \let\QCTOptB\empty
}
\def\Qct{%
  \@ifnextchar[{%
    \QCTOpt}{\QCTNOpt}
}
\def\QCBOpt[#1]#2{%
  \def\QCBOptB{#1}
  \def\QCBOptA{#2}
}
\def\QCBNOpt#1{%
  \def\QCBOptA{#1}
  \let\QCBOptB\empty
}
\def\Qcb{%
  \@ifnextchar[{%
    \QCBOpt}{\QCBNOpt}
}
\def\PrepCapArgs{%
  \ifx\QCBOptA\empty
    \ifx\QCTOptA\empty
      {}%
    \else
      \ifx\QCTOptB\empty
        {\QCTOptA}%
      \else
        [\QCTOptB]{\QCTOptA}%
      \fi
    \fi
  \else
    \ifx\QCBOptA\empty
      {}%
    \else
      \ifx\QCBOptB\empty
        {\QCBOptA}%
      \else
        [\QCBOptB]{\QCBOptA}%
      \fi
    \fi
  \fi
}
\newcount\GRAPHICSTYPE
%\GRAPHICSTYPE 0 is for TurboTeX
%\GRAPHICSTYPE 1 is for DVIWindo (PostScript)
%%%(removed)%\GRAPHICSTYPE 2 is for psfig (PostScript)
\GRAPHICSTYPE=\z@
\def\GRAPHICSPS#1{%
 \ifcase\GRAPHICSTYPE%\GRAPHICSTYPE=0
   \special{ps: #1}%
 \or%\GRAPHICSTYPE=1
   \special{language "PS", include "#1"}%
%%%\or%\GRAPHICSTYPE=2
%%%  #1%
 \fi
}%
%
%
%
% \graffile{ body }                                  %#1
%          { contentswidth (scalar)  }               %#2
%          { contentsheight (scalar) }               %#3
%          { vertical shift when in-line (scalar) }  %#4
\def\graffile#1#2#3#4{%
%%% \ifnum\GRAPHICSTYPE=\tw@
%%%  %Following if using psfig
%%%  \@ifundefined{psfig}{\input psfig.tex}{}%
%%%  \psfig{file=#1, height=#3, width=#2}%
%%% \else
  %Following for all others
  % JCS - added BOXTHEFRAME, see below
    \leavevmode
    \raise -#4 \BOXTHEFRAME{%
        \hbox to #2{\raise #3\hbox to #2{\null #1\hfil}}}%
}%
%
% A box for drafts
\def\draftbox#1#2#3#4{%
 \leavevmode\raise -#4 \hbox{%
  \frame{\rlap{\protect\tiny #1}\hbox to #2%
   {\vrule height#3 width\z@ depth\z@\hfil}%
  }%
 }%
}%
\newcount\draft
\draft=\z@

\newif\ifwasdraft
\wasdraftfalse

%  \GRAPHIC{ body }                                  %#1
%          { draft name }                            %#2
%          { contentswidth (scalar)  }               %#3
%          { contentsheight (scalar) }               %#4
%          { vertical shift when in-line (scalar) }  %#5
\def\GRAPHIC#1#2#3#4#5{%
 \ifnum\draft=\@ne\draftbox{#2}{#3}{#4}{#5}%
  \else\graffile{#1}{#3}{#4}{#5}%
  \fi
 }%
\def\addtoLaTeXparams#1{%
    \edef\LaTeXparams{\LaTeXparams #1}}%
%
% JCS -  added a switch BoxFrame that can 
% be set by including X in the frame params.
% If set a box is drawn around the frame.

\newif\ifBoxFrame \BoxFramefalse
\newif\ifOverFrame \OverFramefalse
\newif\ifUnderFrame \UnderFramefalse

\def\BOXTHEFRAME#1{%
   \hbox{%
      \ifBoxFrame
         \frame{#1}%
      \else
         {#1}%
      \fi
   }%
}

\def\doFRAMEparams#1{\BoxFramefalse\OverFramefalse\UnderFramefalse\readFRAMEparams#1\end}%
\def\readFRAMEparams#1{%
 \ifx#1\end%
  \let\next=\relax
  \else
  \ifx#1i\dispkind=\z@\fi
  \ifx#1d\dispkind=\@ne\fi
  \ifx#1f\dispkind=\tw@\fi
  \ifx#1t\addtoLaTeXparams{t}\fi
  \ifx#1b\addtoLaTeXparams{b}\fi
  \ifx#1p\addtoLaTeXparams{p}\fi
  \ifx#1h\addtoLaTeXparams{h}\fi
  \ifx#1X\BoxFrametrue\fi
  \ifx#1O\OverFrametrue\fi
  \ifx#1U\UnderFrametrue\fi
  \ifx#1w
    \ifnum\draft=1\wasdrafttrue\else\wasdraftfalse\fi
    \draft=\@ne
  \fi
  \let\next=\readFRAMEparams
  \fi
 \next
 }%
%
%Macro for In-line graphics object
%   \IFRAME{ contentswidth (scalar)  }               %#1
%          { contentsheight (scalar) }               %#2
%          { vertical shift when in-line (scalar) }  %#3
%          { draft name }                            %#4
%          { body }                                  %#5
%          { caption}                                %#6

\def\IFRAME#1#2#3#4#5#6{%
      \bgroup
      \let\QCTOptA\empty
      \let\QCTOptB\empty
      \let\QCBOptA\empty
      \let\QCBOptB\empty
      #6%
      \parindent=0pt%
      \leftskip=0pt
      \rightskip=0pt
      \setbox0 = \hbox{\QCBOptA}%
      \@tempdima = #1\relax
      \ifOverFrame
          % Do this later
          \typeout{This is not implemented yet}%
          \show\HELP
      \else
         \ifdim\wd0>\@tempdima
            \advance\@tempdima by \@tempdima
            \ifdim\wd0 >\@tempdima
               \textwidth=\@tempdima
               \setbox1 =\vbox{%
                  \noindent\hbox to \@tempdima{\hfill\GRAPHIC{#5}{#4}{#1}{#2}{#3}\hfill}\\%
                  \noindent\hbox to \@tempdima{\parbox[b]{\@tempdima}{\QCBOptA}}%
               }%
               \wd1=\@tempdima
            \else
               \textwidth=\wd0
               \setbox1 =\vbox{%
                 \noindent\hbox to \wd0{\hfill\GRAPHIC{#5}{#4}{#1}{#2}{#3}\hfill}\\%
                 \noindent\hbox{\QCBOptA}%
               }%
               \wd1=\wd0
            \fi
         \else
            %\show\BBB
            \ifdim\wd0>0pt
              \hsize=\@tempdima
              \setbox1 =\vbox{%
                \unskip\GRAPHIC{#5}{#4}{#1}{#2}{0pt}%
                \break
                \unskip\hbox to \@tempdima{\hfill \QCBOptA\hfill}%
              }%
              \wd1=\@tempdima
           \else
              \hsize=\@tempdima
              \setbox1 =\vbox{%
                \unskip\GRAPHIC{#5}{#4}{#1}{#2}{0pt}%
              }%
              \wd1=\@tempdima
           \fi
         \fi
         \@tempdimb=\ht1
         \advance\@tempdimb by \dp1
         \advance\@tempdimb by -#2%
         \advance\@tempdimb by #3%
         \leavevmode
         \raise -\@tempdimb \hbox{\box1}%
      \fi
      \egroup%
}%
%
%Macro for Display graphics object
%   \DFRAME{ contentswidth (scalar)  }               %#1
%          { contentsheight (scalar) }               %#2
%          { draft label }                           %#3
%          { name }                                  %#4
%          { caption}                                %#5
\def\DFRAME#1#2#3#4#5{%
 \begin{center}
     \let\QCTOptA\empty
     \let\QCTOptB\empty
     \let\QCBOptA\empty
     \let\QCBOptB\empty
     \ifOverFrame 
        #5\QCTOptA\par
     \fi
     \GRAPHIC{#4}{#3}{#1}{#2}{\z@}
     \ifUnderFrame 
        \nobreak\par #5\QCBOptA
     \fi
 \end{center}%
 }%
%
%Macro for Floating graphic object
%   \FFRAME{ framedata f|i tbph x F|T }              %#1
%          { contentswidth (scalar)  }               %#2
%          { contentsheight (scalar) }               %#3
%          { caption }                               %#4
%          { label }                                 %#5
%          { draft name }                            %#6
%          { body }                                  %#7
\def\FFRAME#1#2#3#4#5#6#7{%
 \begin{figure}[#1]%
  \let\QCTOptA\empty
  \let\QCTOptB\empty
  \let\QCBOptA\empty
  \let\QCBOptB\empty
  \ifOverFrame
    #4
    \ifx\QCTOptA\empty
    \else
      \ifx\QCTOptB\empty
        \caption{\QCTOptA}%
      \else
        \caption[\QCTOptB]{\QCTOptA}%
      \fi
    \fi
    \ifUnderFrame\else
      \label{#5}%
    \fi
  \else
    \UnderFrametrue%
  \fi
  \begin{center}\GRAPHIC{#7}{#6}{#2}{#3}{\z@}\end{center}%
  \ifUnderFrame
    #4
    \ifx\QCBOptA\empty
      \caption{}%
    \else
      \ifx\QCBOptB\empty
        \caption{\QCBOptA}%
      \else
        \caption[\QCBOptB]{\QCBOptA}%
      \fi
    \fi
    \label{#5}%
  \fi
  \end{figure}%
 }%
%
%
%    \FRAME{ framedata f|i tbph x F|T }              %#1
%          { contentswidth (scalar)  }               %#2
%          { contentsheight (scalar) }               %#3
%          { vertical shift when in-line (scalar) }  %#4
%          { caption }                               %#5
%          { label }                                 %#6
%          { name }                                  %#7
%          { body }                                  %#8
%
%    framedata is a string which can contain the following
%    characters: idftbphxFT
%    Their meaning is as follows:
%             i, d or f : in-line, display, or floating
%             t,b,p,h   : LaTeX floating placement options
%             x         : fit contents box to contents
%             F or T    : Figure or Table. 
%                         Later this can expand
%                         to a more general float class.
%
%
\newcount\dispkind%

\def\makeactives{
  \catcode`\"=\active
  \catcode`\;=\active
  \catcode`\:=\active
  \catcode`\'=\active
  \catcode`\~=\active
}
\bgroup
   \makeactives
   \gdef\activesoff{%
      \def"{\string"}
      \def;{\string;}
      \def:{\string:}
      \def'{\string'}
      \def~{\string~}
      %\bbl@deactivate{"}%
      %\bbl@deactivate{;}%
      %\bbl@deactivate{:}%
      %\bbl@deactivate{'}%
    }
\egroup

\def\FRAME#1#2#3#4#5#6#7#8{%
 \bgroup
 \@ifundefined{bbl@deactivate}{}{\activesoff}
 \ifnum\draft=\@ne
   \wasdrafttrue
 \else
   \wasdraftfalse%
 \fi
 \def\LaTeXparams{}%
 \dispkind=\z@
 \def\LaTeXparams{}%
 \doFRAMEparams{#1}%
 \ifnum\dispkind=\z@\IFRAME{#2}{#3}{#4}{#7}{#8}{#5}\else
  \ifnum\dispkind=\@ne\DFRAME{#2}{#3}{#7}{#8}{#5}\else
   \ifnum\dispkind=\tw@
    \edef\@tempa{\noexpand\FFRAME{\LaTeXparams}}%
    \@tempa{#2}{#3}{#5}{#6}{#7}{#8}%
    \fi
   \fi
  \fi
  \ifwasdraft\draft=1\else\draft=0\fi{}%
  \egroup
 }%
%
% This macro added to let SW gobble a parameter that
% should not be passed on and expanded. 

\def\TEXUX#1{"texux"}

%
% Macros for text attributes:
%
%
%
%
%%%%%%%%%%%%%%%%%%%%%%%%%%%%%%%%%%%%%%%%%%%%%%%%%%%%%%%%%%%%%%%%%%%%%%%%
%
%
% macros for user - defined functions
%
%

%
% miscellaneous 
%\long\def\QQQ#1#2{}%
\long\def\QQQ#1#2{%
     \long\expandafter\def\csname#1\endcsname{#2}}%
\@ifundefined{QTP}{\def\QTP#1{}}{}
\@ifundefined{QEXCLUDE}{\def\QEXCLUDE#1{}}{}
%\@ifundefined{Qcb}{\def\Qcb#1{#1}}{}
%\@ifundefined{Qct}{\def\Qct#1{#1}}{}
\@ifundefined{Qlb}{}{}
\@ifundefined{Qlt}{}{}
\long\def\QQA#1#2{}%
\def\QTR#1#2{{\csname#1\endcsname #2}}%(gp) Is this the best?
\def\EXPAND#1[#2]#3{}%
\def\NOEXPAND#1[#2]#3{}%
\def\LaTeXparent#1{}%
\def\ChildStyles#1{}%
\def\ChildDefaults#1{}%
\def\QTagDef#1#2#3{}%
%
% Macros for style editor docs
\@ifundefined{StyleEditBeginDoc}{}{}
%
% Macros for footnotes
\def\QQfnmark#1{\footnotemark}

%
% Macros for indexing.
\def\makeatletter\input gnuindex.sty\makeatother\makeindex{\makeatletter\input gnuindex.sty\makeatother\makeindex}%	
\@ifundefined{INDEX}{\def\INDEX#1#2{}{}}{}%
\@ifundefined{SUBINDEX}{\def\SUBINDEX#1#2#3{}{}{}}{}%
\@ifundefined{initial}%  
   {\def\initial#1{\bigbreak{\raggedright\large\bf #1}\kern 2\p@\penalty3000}}%
   {}%
\@ifundefined{entry}{}{}%
\@ifundefined{primary}{}{}%
\@ifundefined{secondary}{}{}%
\@ifundefined{ZZZ}{}{\makeatletter\input gnuindex.sty\makeatother\makeindex\makeatletter}%
%
% Attempts to avoid problems with other styles
\@ifundefined{abstract}{%
 \def\abstract{%
  \if@twocolumn
   \section*{Abstract (Not appropriate in this style!)}%
   \else \small 
   \begin{center}{\bf Abstract\vspace{-.5em}\vspace{\z@}}\end{center}%
   \quotation 
   \fi
  }%
 }{%
 }%
\@ifundefined{endabstract}{\def\endabstract
  {\if@twocolumn\else\endquotation\fi}}{}%
\@ifundefined{maketitle}{\def\maketitle#1{}}{}%
\@ifundefined{affiliation}{\def\affiliation#1{}}{}%
\@ifundefined{proof}{}{}%
\@ifundefined{endproof}{}{}%
\@ifundefined{newfield}{\def\newfield#1#2{}}{}%
\@ifundefined{chapter}{\def\chapter#1{\par(Chapter head:)#1\par }%
 \newcount\c@chapter}{}%
\@ifundefined{part}{\def\part#1{\par(Part head:)#1\par }}{}%
\@ifundefined{section}{\def\section#1{\par(Section head:)#1\par }}{}%
\@ifundefined{subsection}{\def\subsection#1%
 {\par(Subsection head:)#1\par }}{}%
\@ifundefined{subsubsection}{\def\subsubsection#1%
 {\par(Subsubsection head:)#1\par }}{}%
\@ifundefined{paragraph}{\def\paragraph#1%
 {\par(Subsubsubsection head:)#1\par }}{}%
\@ifundefined{subparagraph}{\def\subparagraph#1%
 {\par(Subsubsubsubsection head:)#1\par }}{}%
%%%%%%%%%%%%%%%%%%%%%%%%%%%%%%%%%%%%%%%%%%%%%%%%%%%%%%%%%%%%%%%%%%%%%%%%
% These symbols are not recognized by LaTeX
\@ifundefined{therefore}{}{}%
\@ifundefined{backepsilon}{}{}%
\@ifundefined{yen}{}{}%
\@ifundefined{registered}{%
   \def\registered{\relax\ifmmode{}\r@gistered
                    \else$\m@th\r@gistered$\fi}%
 \def\r@gistered{^{\ooalign
  {\hfil\raise.07ex\hbox{$\scriptstyle\rm\text{R}$}\hfil\crcr
  \mathhexbox20D}}}}{}%
\@ifundefined{Eth}{}{}%
\@ifundefined{eth}{}{}%
\@ifundefined{Thorn}{}{}%
\@ifundefined{thorn}{}{}%
% A macro to allow any symbol that requires math to appear in text
%
\@ifundefined{degree}{}{}%
%
% macros for T3TeX files
\newdimen\theight
\def\Column{%
 \vadjust{\setbox\z@=\hbox{\scriptsize\quad\quad tcol}%
  \theight=\ht\z@\advance\theight by \dp\z@\advance\theight by \lineskip
  \kern -\theight \vbox to \theight{%
   \rightline{\rlap{\box\z@}}%
   \vss
   }%
  }%
 }%
\def\qed{%
 \ifhmode\unskip\nobreak\fi\ifmmode\ifinner\else\hskip5\p@\fi\fi
 \hbox{\hskip5\p@\vrule width4\p@ height6\p@ depth1.5\p@\hskip\p@}%
 }%
\def\miss{\hbox{\vrule height2\p@ width 2\p@ depth\z@}}%
%\def\miss{\hbox{.}}%        %another possibility 
%
%           %always translated to \left| or \right|
%
\def\tcol#1{{\baselineskip=6\p@ \vcenter{#1}} \Column}  %
%
%                 %dummy entry in column 
%             %column entry
%               %column entry (not math)
%
%\newcount\notenumber
%\def\clearnotenumber{\notenumber=0}
%\def\note{\global\advance\notenumber by 1
% \footnote{$^{\the\notenumber}$}}
%\def\note{\global\advance\notenumber by 1
%
%
%

\def\newfmtname{LaTeX2e}
\def\chkcompat{%
   \if@compatibility
   \else
     \usepackage{latexsym}
   \fi
}

\ifx\fmtname\newfmtname
  \DeclareOldFontCommand{\rm}{\normalfont\rmfamily}{\mathrm}
  \DeclareOldFontCommand{\sf}{\normalfont\sffamily}{\mathsf}
  \DeclareOldFontCommand{\tt}{\normalfont\ttfamily}{\mathtt}
  \DeclareOldFontCommand{\bf}{\normalfont\bfseries}{\mathbf}
  \DeclareOldFontCommand{\it}{\normalfont\itshape}{\mathit}
  \DeclareOldFontCommand{\sl}{\normalfont\slshape}{\@nomath\sl}
  \DeclareOldFontCommand{\sc}{\normalfont\scshape}{\@nomath\sc}
  \chkcompat
\fi

%
% Greek bold macros
% Redefine all of the math symbols 
% which might be bolded	 - there are 
% probably others to add to this list

\def\alpha{\Greekmath 010B }%
\def\beta{\Greekmath 010C }%
\def\gamma{\Greekmath 010D }%
\def\delta{\Greekmath 010E }%
\def\epsilon{\Greekmath 010F }%
\def\zeta{\Greekmath 0110 }%
\def\eta{\Greekmath 0111 }%
\def\theta{\Greekmath 0112 }%
\def\iota{\Greekmath 0113 }%
\def\kappa{\Greekmath 0114 }%
\def\lambda{\Greekmath 0115 }%
\def\mu{\Greekmath 0116 }%
\def\nu{\Greekmath 0117 }%
\def\xi{\Greekmath 0118 }%
\def\pi{\Greekmath 0119 }%
\def\rho{\Greekmath 011A }%
\def\sigma{\Greekmath 011B }%
\def\tau{\Greekmath 011C }%
\def\upsilon{\Greekmath 011D }%
\def\phi{\Greekmath 011E }%
\def\chi{\Greekmath 011F }%
\def\psi{\Greekmath 0120 }%
\def\omega{\Greekmath 0121 }%
\def\varepsilon{\Greekmath 0122 }%
\def\vartheta{\Greekmath 0123 }%
\def\varpi{\Greekmath 0124 }%
\def\varrho{\Greekmath 0125 }%
\def\varsigma{\Greekmath 0126 }%
\def\varphi{\Greekmath 0127 }%

\def\nabla{\Greekmath 0272 }
\def\FindBoldGroup{%
   {\setbox0=\hbox{$\mathbf{x\global\edef\theboldgroup{\the\mathgroup}}$}}%
}

\def\Greekmath#1#2#3#4{%
    \if@compatibility
        \ifnum\mathgroup=\symbold
           \mathchoice{\mbox{\boldmath$\displaystyle\mathchar"#1#2#3#4$}}%
                      {\mbox{\boldmath$\textstyle\mathchar"#1#2#3#4$}}%
                      {\mbox{\boldmath$\scriptstyle\mathchar"#1#2#3#4$}}%
                      {\mbox{\boldmath$\scriptscriptstyle\mathchar"#1#2#3#4$}}%
        \else
           \mathchar"#1#2#3#4% 
        \fi 
    \else 
        \FindBoldGroup
        \ifnum\mathgroup=\theboldgroup % For 2e
           \mathchoice{\mbox{\boldmath$\displaystyle\mathchar"#1#2#3#4$}}%
                      {\mbox{\boldmath$\textstyle\mathchar"#1#2#3#4$}}%
                      {\mbox{\boldmath$\scriptstyle\mathchar"#1#2#3#4$}}%
                      {\mbox{\boldmath$\scriptscriptstyle\mathchar"#1#2#3#4$}}%
        \else
           \mathchar"#1#2#3#4% 
        \fi     	    
	  \fi}

\newif\ifGreekBold  \GreekBoldfalse
\let\SAVEPBF=\pbf
\def\pbf{\GreekBoldtrue\SAVEPBF}%

\@ifundefined{theorem}{\newtheorem{theorem}{Theorem}}{}
\@ifundefined{lemma}{}{}
\@ifundefined{corollary}{\newtheorem{corollary}[theorem]{Corollary}}{}
\@ifundefined{conjecture}{}{}
\@ifundefined{proposition}{}{}
\@ifundefined{axiom}{}{}
\@ifundefined{remark}{\newtheorem{remark}{Remark}}{}
\@ifundefined{example}{\newtheorem{example}{Example}}{}
\@ifundefined{exercise}{}{}
\@ifundefined{definition}{\newtheorem{definition}{Definition}}{}

\@ifundefined{mathletters}{%
  \newcounter{equationnumber}  
  \def\mathletters{%
     \addtocounter{equation}{1}
     \edef\@currentlabel{\theequation}%
     \setcounter{equationnumber}{\c@equation}
     \setcounter{equation}{0}%
     \edef\theequation{\@currentlabel\noexpand\alph{equation}}%
  }
  
}{}

%Logos
\@ifundefined{BibTeX}{%
    \def\BibTeX{{\rm B\kern-.05em{\sc i\kern-.025em b}\kern-.08em
                 T\kern-.1667em\lower.7ex\hbox{E}\kern-.125emX}}}{}%
\@ifundefined{AmS}%
    {\def\AmS{{\protect\usefont{OMS}{cmsy}{m}{n}%
                A\kern-.1667em\lower.5ex\hbox{M}\kern-.125emS}}}{}%
\@ifundefined{AmSTeX}{}{}%
%

%%%%%%%%%%%%%%%%%%%%%%%%%%%%%%%%%%%%%%%%%%%%%%%%%%%%%%%%%%%%%%%%%%%%%%%
% NOTE: The rest of this file is read only if amstex has not been
% loaded.  This section is used to define amstex constructs in the
% event they have not been defined.
%
%
\ifx\ds@amstex\relax
   \message{amstex already loaded}\makeatother % 2.09 compatability
\else
   \@ifpackageloaded{amstex}%
      {\message{amstex already loaded}\makeatother }
      {}
   \@ifpackageloaded{amsgen}%
      {\message{amsgen already loaded}\makeatother }
      {}
\fi
%%%%%%%%%%%%%%%%%%%%%%%%%%%%%%%%%%%%%%%%%%%%%%%%%%%%%%%%%%%%%%%%%%%%%%%%
%%
%
%
%  Macros to define some AMS LaTeX constructs when 
%  AMS LaTeX has not been loaded
% 
% These macros are copied from the AMS-TeX package for doing
% multiple integrals.
%
\let\DOTSI\relax
\def\RIfM@{\relax\ifmmode}%
\def\FN@{\futurelet\next}%
\newcount\intno@
\def\iint{\DOTSI\intno@\tw@\FN@\ints@}%
\def\iiint{\DOTSI\intno@\thr@@\FN@\ints@}%
\def\iiiint{\DOTSI\intno@4 \FN@\ints@}%
\def\idotsint{\DOTSI\intno@\z@\FN@\ints@}%
\def\ints@{\findlimits@\ints@@}%
\newif\iflimtoken@
\newif\iflimits@
\def\findlimits@{\limtoken@true\ifx\next\limits\limits@true
 \else\ifx\next\nolimits\limits@false\else
 \limtoken@false\ifx\ilimits@\nolimits\limits@false\else
 \ifinner\limits@false\else\limits@true\fi\fi\fi\fi}%
\def\multint@{\int\ifnum\intno@=\z@\intdots@                          %1
 \else\intkern@\fi                                                    %2
 \ifnum\intno@>\tw@\int\intkern@\fi                                   %3
 \ifnum\intno@>\thr@@\int\intkern@\fi                                 %4
 \int}%                                                               %5
\def\multintlimits@{\intop\ifnum\intno@=\z@\intdots@\else\intkern@\fi
 \ifnum\intno@>\tw@\intop\intkern@\fi
 \ifnum\intno@>\thr@@\intop\intkern@\fi\intop}%
\def\intic@{%
    \mathchoice{\hskip.5em}{\hskip.4em}{\hskip.4em}{\hskip.4em}}%
\def\negintic@{\mathchoice
 {\hskip-.5em}{\hskip-.4em}{\hskip-.4em}{\hskip-.4em}}%
\def\ints@@{\iflimtoken@                                              %1
 \def\ints@@@{\iflimits@\negintic@
   \mathop{\intic@\multintlimits@}\limits                             %2
  \else\multint@\nolimits\fi                                          %3
  \eat@}%                                                             %4
 \else                                                                %5
 \def\ints@@@{\iflimits@\negintic@
  \mathop{\intic@\multintlimits@}\limits\else
  \multint@\nolimits\fi}\fi\ints@@@}%
\def\intkern@{\mathchoice{\!\!\!}{\!\!}{\!\!}{\!\!}}%
\def\plaincdots@{\mathinner{\cdotp\cdotp\cdotp}}%
\def\intdots@{\mathchoice{\plaincdots@}%
 {{\cdotp}\mkern1.5mu{\cdotp}\mkern1.5mu{\cdotp}}%
 {{\cdotp}\mkern1mu{\cdotp}\mkern1mu{\cdotp}}%
 {{\cdotp}\mkern1mu{\cdotp}\mkern1mu{\cdotp}}}%
%
%
%  These macros are for doing the AMS \text{} construct
%
\def\RIfM@{\relax\protect\ifmmode}
\def\text{\RIfM@\expandafter\text@\else\expandafter\mbox\fi}
\let\nfss@text\text
\def\text@#1{\mathchoice
   {\textdef@\displaystyle\f@size{#1}}%
   {\textdef@\textstyle\tf@size{\firstchoice@false #1}}%
   {\textdef@\textstyle\sf@size{\firstchoice@false #1}}%
   {\textdef@\textstyle \ssf@size{\firstchoice@false #1}}%
   \glb@settings}

\def\textdef@#1#2#3{\hbox{{%
                    \everymath{#1}%
                    \let\f@size#2\selectfont
                    #3}}}
\newif\iffirstchoice@
\firstchoice@true
%
%    Old Scheme for \text
%
%\def\rmfam{\z@}%
%\newif\iffirstchoice@
%\firstchoice@true
%\def\textfonti{\the\textfont\@ne}%
%\def\textfontii{\the\textfont\tw@}%
%\def\text{\RIfM@\expandafter\text@\else\expandafter\text@@\fi}%
%\def\text@@#1{\leavevmode\hbox{#1}}%
%\def\text@#1{\mathchoice
% {\hbox{\everymath{\displaystyle}\def\textfonti{\the\textfont\@ne}%
%  \def\textfontii{\the\textfont\tw@}\textdef@@ T#1}}%
% {\hbox{\firstchoice@false
%  \everymath{\textstyle}\def\textfonti{\the\textfont\@ne}%
%  \def\textfontii{\the\textfont\tw@}\textdef@@ T#1}}%
% {\hbox{\firstchoice@false
%  \everymath{\scriptstyle}\def\textfonti{\the\scriptfont\@ne}%
%  \def\textfontii{\the\scriptfont\tw@}\textdef@@ S\rm#1}}%
% {\hbox{\firstchoice@false
%  \everymath{\scriptscriptstyle}\def\textfonti
%  {\the\scriptscriptfont\@ne}%
%  \def\textfontii{\the\scriptscriptfont\tw@}\textdef@@ s\rm#1}}}%
%\def\textdef@@#1{\textdef@#1\rm\textdef@#1\bf\textdef@#1\sl
%    \textdef@#1\it}%
%\def\DN@{\def\next@}%
%\def\eat@#1{}%
%\def\textdef@#1#2{%
% \DN@{\csname\expandafter\eat@\string#2fam\endcsname}%
% \if S#1\edef#2{\the\scriptfont\next@\relax}%
% \else\if s#1\edef#2{\the\scriptscriptfont\next@\relax}%
% \else\edef#2{\the\textfont\next@\relax}\fi\fi}%
%
%
%These are the AMS constructs for multiline limits.
%
\def\Let@{\relax\iffalse{\fi\let\\=\cr\iffalse}\fi}%
\def\vspace@{\def\vspace##1{\crcr\noalign{\vskip##1\relax}}}%
\def\multilimits@{\bgroup\vspace@\Let@
 \baselineskip\fontdimen10 \scriptfont\tw@
 \advance\baselineskip\fontdimen12 \scriptfont\tw@
 \lineskip\thr@@\fontdimen8 \scriptfont\thr@@
 \lineskiplimit\lineskip
 \vbox\bgroup\ialign\bgroup\hfil$\m@th\scriptstyle{##}$\hfil\crcr}%
\def\Sb{_\multilimits@}%
\def\endSb{\crcr\egroup\egroup\egroup}%
\def\Sp{^\multilimits@}%

%
%
%These are AMS constructs for horizontal arrows
%
\newdimen\ex@
\ex@.2326ex
\def\rightarrowfill@#1{$#1\m@th\mathord-\mkern-6mu\cleaders
 \hbox{$#1\mkern-2mu\mathord-\mkern-2mu$}\hfill
 \mkern-6mu\mathord\rightarrow$}%
\def\leftarrowfill@#1{$#1\m@th\mathord\leftarrow\mkern-6mu\cleaders
 \hbox{$#1\mkern-2mu\mathord-\mkern-2mu$}\hfill\mkern-6mu\mathord-$}%
\def\leftrightarrowfill@#1{$#1\m@th\mathord\leftarrow
\mkern-6mu\cleaders
 \hbox{$#1\mkern-2mu\mathord-\mkern-2mu$}\hfill
 \mkern-6mu\mathord\rightarrow$}%
\def\overrightarrow{\mathpalette\overrightarrow@}%
\def\overrightarrow@#1#2{\vbox{\ialign{##\crcr\rightarrowfill@#1\crcr
 \noalign{\kern-\ex@\nointerlineskip}$\m@th\hfil#1#2\hfil$\crcr}}}%

\def\overleftarrow{\mathpalette\overleftarrow@}%
\def\overleftarrow@#1#2{\vbox{\ialign{##\crcr\leftarrowfill@#1\crcr
 \noalign{\kern-\ex@\nointerlineskip}$\m@th\hfil#1#2\hfil$\crcr}}}%
\def\overleftrightarrow{\mathpalette\overleftrightarrow@}%
\def\overleftrightarrow@#1#2{\vbox{\ialign{##\crcr
   \leftrightarrowfill@#1\crcr
 \noalign{\kern-\ex@\nointerlineskip}$\m@th\hfil#1#2\hfil$\crcr}}}%
\def\underrightarrow{\mathpalette\underrightarrow@}%
\def\underrightarrow@#1#2{\vtop{\ialign{##\crcr$\m@th\hfil#1#2\hfil
  $\crcr\noalign{\nointerlineskip}\rightarrowfill@#1\crcr}}}%

\def\underleftarrow{\mathpalette\underleftarrow@}%
\def\underleftarrow@#1#2{\vtop{\ialign{##\crcr$\m@th\hfil#1#2\hfil
  $\crcr\noalign{\nointerlineskip}\leftarrowfill@#1\crcr}}}%
\def\underleftrightarrow{\mathpalette\underleftrightarrow@}%
\def\underleftrightarrow@#1#2{\vtop{\ialign{##\crcr$\m@th
  \hfil#1#2\hfil$\crcr
 \noalign{\nointerlineskip}\leftrightarrowfill@#1\crcr}}}%
%%%%%%%%%%%%%%%%%%%%%

% 94.0815 by Jon:

\def\qopnamewl@#1{\mathop{\operator@font#1}\nlimits@}
\let\nlimits@\displaylimits
\def\setboxz@h{\setbox\z@\hbox}

\def\varlim@#1#2{\mathop{\vtop{\ialign{##\crcr
 \hfil$#1\m@th\operator@font lim$\hfil\crcr
 \noalign{\nointerlineskip}#2#1\crcr
 \noalign{\nointerlineskip\kern-\ex@}\crcr}}}}

 \def\rightarrowfill@#1{\m@th\setboxz@h{$#1-$}\ht\z@\z@
  $#1\copy\z@\mkern-6mu\cleaders
  \hbox{$#1\mkern-2mu\box\z@\mkern-2mu$}\hfill
  \mkern-6mu\mathord\rightarrow$}
\def\leftarrowfill@#1{\m@th\setboxz@h{$#1-$}\ht\z@\z@
  $#1\mathord\leftarrow\mkern-6mu\cleaders
  \hbox{$#1\mkern-2mu\copy\z@\mkern-2mu$}\hfill
  \mkern-6mu\box\z@$}

\def\projlim{\qopnamewl@{proj\,lim}}
\def\injlim{\qopnamewl@{inj\,lim}}
\def\varinjlim{\mathpalette\varlim@\rightarrowfill@}
\def\varprojlim{\mathpalette\varlim@\leftarrowfill@}
\def\varliminf{\mathpalette\varliminf@{}}
\def\varliminf@#1{\mathop{\underline{\vrule\@depth.2\ex@\@width\z@
   \hbox{$#1\m@th\operator@font lim$}}}}
\def\varlimsup{\mathpalette\varlimsup@{}}
\def\varlimsup@#1{\mathop{\overline
  {\hbox{$#1\m@th\operator@font lim$}}}}

%
%%%%%%%%%%%%%%%%%%%%%%%%%%%%%%%%%%%%%%%%%%%%%%%%%%%%%%%%%%%%%%%%%%%%%
%
%
%
%
%
%
%
%
%
%
%
%
%
%
%
%
%
%
%
%
%
%
% Macros for text size operators:

%JCS - added braces and \mathop around \displaystyle\int, etc.
%
%
%
%
%
%
%
%
%
%
%
%
%
%
%
%
%
%
%
%
%
%Macros for display size operators:
%

%
%
%
%
%
%
%
%
%
%
%
%
%
%
%
%
%
%
%
%Companion to stackrel
%
%
%
% These are AMS environments that will be defined to
% be verbatims if amstex has not actually been 
% loaded
%
%
\begingroup \catcode `|=0 \catcode `[= 1
\catcode`]=2 \catcode `\{=12 \catcode `\}=12
\catcode`\\=12 
|gdef|@alignverbatim#1\end{align}[#1|end[align]]
|gdef|@salignverbatim#1\end{align*}[#1|end[align*]]

|gdef|@alignatverbatim#1\end{alignat}[#1|end[alignat]]
|gdef|@salignatverbatim#1\end{alignat*}[#1|end[alignat*]]

|gdef|@xalignatverbatim#1\end{xalignat}[#1|end[xalignat]]
|gdef|@sxalignatverbatim#1\end{xalignat*}[#1|end[xalignat*]]

|gdef|@gatherverbatim#1\end{gather}[#1|end[gather]]
|gdef|@sgatherverbatim#1\end{gather*}[#1|end[gather*]]

|gdef|@gatherverbatim#1\end{gather}[#1|end[gather]]
|gdef|@sgatherverbatim#1\end{gather*}[#1|end[gather*]]

|gdef|@multilineverbatim#1\end{multiline}[#1|end[multiline]]
|gdef|@smultilineverbatim#1\end{multiline*}[#1|end[multiline*]]

|gdef|@arraxverbatim#1\end{arrax}[#1|end[arrax]]
|gdef|@sarraxverbatim#1\end{arrax*}[#1|end[arrax*]]

|gdef|@tabulaxverbatim#1\end{tabulax}[#1|end[tabulax]]
|gdef|@stabulaxverbatim#1\end{tabulax*}[#1|end[tabulax*]]

|endgroup

\def\align{\@verbatim \frenchspacing\@vobeyspaces \@alignverbatim
You are using the "align" environment in a style in which it is not defined.}

\@namedef{align*}{\@verbatim\@salignverbatim
You are using the "align*" environment in a style in which it is not defined.}
\expandafter\let\csname endalign*\endcsname =\endtrivlist

\def\alignat{\@verbatim \frenchspacing\@vobeyspaces \@alignatverbatim
You are using the "alignat" environment in a style in which it is not defined.}

\@namedef{alignat*}{\@verbatim\@salignatverbatim
You are using the "alignat*" environment in a style in which it is not defined.}
\expandafter\let\csname endalignat*\endcsname =\endtrivlist

\def\xalignat{\@verbatim \frenchspacing\@vobeyspaces \@xalignatverbatim
You are using the "xalignat" environment in a style in which it is not defined.}

\@namedef{xalignat*}{\@verbatim\@sxalignatverbatim
You are using the "xalignat*" environment in a style in which it is not defined.}
\expandafter\let\csname endxalignat*\endcsname =\endtrivlist

\def\gather{\@verbatim \frenchspacing\@vobeyspaces \@gatherverbatim
You are using the "gather" environment in a style in which it is not defined.}

\@namedef{gather*}{\@verbatim\@sgatherverbatim
You are using the "gather*" environment in a style in which it is not defined.}
\expandafter\let\csname endgather*\endcsname =\endtrivlist

\def\multiline{\@verbatim \frenchspacing\@vobeyspaces \@multilineverbatim
You are using the "multiline" environment in a style in which it is not defined.}

\@namedef{multiline*}{\@verbatim\@smultilineverbatim
You are using the "multiline*" environment in a style in which it is not defined.}
\expandafter\let\csname endmultiline*\endcsname =\endtrivlist

\def\arrax{\@verbatim \frenchspacing\@vobeyspaces \@arraxverbatim
You are using a type of "array" construct that is only allowed in AmS-LaTeX.}

\def\tabulax{\@verbatim \frenchspacing\@vobeyspaces \@tabulaxverbatim
You are using a type of "tabular" construct that is only allowed in AmS-LaTeX.}

\@namedef{arrax*}{\@verbatim\@sarraxverbatim
You are using a type of "array*" construct that is only allowed in AmS-LaTeX.}
\expandafter\let\csname endarrax*\endcsname =\endtrivlist

\@namedef{tabulax*}{\@verbatim\@stabulaxverbatim
You are using a type of "tabular*" construct that is only allowed in AmS-LaTeX.}
\expandafter\let\csname endtabulax*\endcsname =\endtrivlist

% macro to simulate ams tag construct

% This macro is a fix to eqnarray
\def\@@eqncr{\let\@tempa\relax
    \ifcase\@eqcnt \def\@tempa{& & &}\or \def\@tempa{& &}%
      \else \def\@tempa{&}\fi
     \@tempa
     \if@eqnsw
        \iftag@
           \@taggnum
        \else
           \@eqnnum\stepcounter{equation}%
        \fi
     \fi
     \global\tag@false
     \global\@eqnswtrue
     \global\@eqcnt\z@\cr}

% This macro is a fix to the equation environment
 \def\endequation{%
     \ifmmode\ifinner % FLEQN hack
      \iftag@
        \addtocounter{equation}{-1} % undo the increment made in the begin part
        $\hfil
           \displaywidth\linewidth\@taggnum\egroup \endtrivlist
        \global\tag@false
        \global\@ignoretrue   
      \else
        $\hfil
           \displaywidth\linewidth\@eqnnum\egroup \endtrivlist
        \global\tag@false
        \global\@ignoretrue 
      \fi
     \else   
      \iftag@
        \addtocounter{equation}{-1} % undo the increment made in the begin part
        \eqno \hbox{\@taggnum}
        \global\tag@false%
        $$\global\@ignoretrue
      \else
        \eqno \hbox{\@eqnnum}% $$ BRACE MATCHING HACK
        $$\global\@ignoretrue
      \fi
     \fi\fi
 } 

 \newif\iftag@ \tag@false
 
 \def\tag{\@ifnextchar*{\@tagstar}{\@tag}}
 \def\@tag#1{%
     \global\tag@true
     \global\def\@taggnum{(#1)}}
 \def\@tagstar*#1{%
     \global\tag@true
     \global\def\@taggnum{#1}%  
}

% Do not add anything to the end of this file.  
% The last section of the file is loaded only if 
% amstex has not been.

\makeatother

\begin{document}
\title[More on deviations from Mean Value ]{More on deviations from Mean
Value }
\author{Shoshana Abramovich}
\address{Department of Mathematics, University of Haifa, Haifa, Israel}
\email{abramos@math.haifa.ac.il}
\date{July 14, 2025 }
\subjclass{26D15,}
\keywords{Convexity, Superquadracity, Jensen inequality, Mean Value}

\begin{abstract}
This paper deals with more refinements of inequalities related to deviations
from Mean Value involving superquadratic and uniformly convex functions.
\end{abstract}

\maketitle

\section{\textbf{\ Introduction}}

This paper deals with more refinements of inequalities related to deviations
from Mean Value involving superquadratic and uniformly convex functions.

We start with quoting known definitions and theorems.

\begin{definition}
\label{Def1} \cite{AJS} A function $\varphi :I=\left[ 0,b\right) \rightarrow 
%TCIMACRO{\U{211d} }%
%BeginExpansion
\mathbb{R}
%EndExpansion
,$ $b\leq \infty $, is \textbf{superquadratic} provided that for all $x\in I$%
\ there exists a constant $C_{x}\in \ 
%TCIMACRO{\U{211d} }%
%BeginExpansion
\mathbb{R}
%EndExpansion
$ such that%
\begin{equation}
\varphi \left( y\right) \geq \varphi \left( x\right) +C_{x}\left( y-x\right)
+\varphi \left( \left\vert y-x\right\vert \right)  \label{1.1}
\end{equation}%
for all $y\in I$.\ If the reverse of (\ref{1.1}) holds then $\varphi $\ is
called subquadratic.
\end{definition}

\begin{remark}
\bigskip\ \label{Rem1} \cite{AJS} The function $\varphi $ is superquadratic
on $I=\left[ 0,b\right) \rightarrow 
%TCIMACRO{\U{211d} }%
%BeginExpansion
\mathbb{R}
%EndExpansion
$, if and only if the inequality$,$ 
\begin{equation*}
\varphi \left( \sum_{r=1}^{n}\lambda _{r}x_{r}\right) \leq
\sum_{r=1}^{n}\lambda _{r}\left( \varphi \left( x_{r}\right) -\varphi \left(
\left\vert x_{r}-\sum_{j=1}^{n}\lambda _{j}x_{j}\right\vert \right) \right)
\end{equation*}%
holds for $x_{r}\in I,$ $\lambda _{r}\geq 0,$ \ $r=1,...,n$, and $%
\sum_{r=1}^{n}\lambda _{r}=1.$
\end{remark}

\begin{definition}
\label{Def2} \cite{N} Let $\left[ a,b\right] \subset 
%TCIMACRO{\U{211d} }%
%BeginExpansion
\mathbb{R}
%EndExpansion
$ be an interval and $\Phi :\left[ 0,b-a\right] \rightarrow 
%TCIMACRO{\U{211d} }%
%BeginExpansion
\mathbb{R}
%EndExpansion
$ be a function. A function $f:\left[ a,b\right] \rightarrow 
%TCIMACRO{\U{211d} }%
%BeginExpansion
\mathbb{R}
%EndExpansion
$ is said\ to be generalized $\Phi $-uniformly convex\ if:%
\begin{eqnarray*}
tf\left( x\right) +\left( 1-t\right) f\left( y\right) &\geq &f\left(
tx+\left( 1-t\right) y\right) +t\left( 1-t\right) \Phi \left( \left\vert
x-y\right\vert \right) \\
\text{for \ }x,y &\in &\left[ a,b\right] \text{ \ and }t\in \left[ 0,1\right]
\text{.}
\end{eqnarray*}%
If in addition $\Phi \geq 0$, then $f$\ is said to be $\Phi $\textbf{%
-uniformly convex}\textit{, }or\textit{\ }\textbf{uniformly convex with
modulus }$\Phi $. In the special case where $\Phi \left( \left\vert
x-y\right\vert \right) =c\left( x-y\right) ^{2}$, $c>0$, $f$ is called 
\textbf{strongly convex} function.
\end{definition}

\begin{remark}
\label{Rem2} It is proved in \cite{Z} and \cite{N} that when $f$ is
uniformly convex on $\left[ a,b\right] $,\textbf{\ }there is always a
modulus $\Phi $ which is increasing and $\Phi \left( 0\right) =0$. In this
paper we use such a $\Phi $ when dealing with uniformly convex function.

It is also shown in \cite{Z} that the inequality%
\begin{equation*}
f\left( \sum_{r=1}^{n}\lambda _{r}x_{r}\right) \leq \sum_{r=1}^{n}\lambda
_{r}\left( f\left( x_{r}\right) -\Phi \left( \left\vert
x_{r}-\sum_{j=1}^{n}\lambda _{j}x_{j}\right\vert \right) \right)
\end{equation*}%
holds, when $x_{r}\in \left[ a,b\right] ,$ $0\leq \lambda _{r}\leq 1,$ $%
r=1,...,n,\ \sum_{r=1}^{n}\lambda _{r}=1$. It is easy to verify that this
inequality holds also for generalized uniformly convex functions and that in
these cases $\Phi \left( 0\right) \leq 0$\ .
\end{remark}

The following theorem is proved in \cite{CIPU}:

\begin{theorem}
\label{Th1} Let $n>1$\ be an integer and $x_{1},x_{2},...,x_{n}$\ be
positive real numbers. Denote $a=\frac{1}{n}\sum_{i=1}^{n}x_{i}$\ \ and \ $b=%
\frac{1}{n}\sum_{i=1}^{n}x_{i}^{2},$\ \ then $\ \underset{1\leq k\leq n}{%
\max }\left\{ \left\vert x_{k}-a\right\vert \right\} \leq \sqrt{\left(
n-1\right) \left( b-a^{2}\right) }$.
\end{theorem}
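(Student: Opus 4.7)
The plan is to translate the inequality into a statement about the centered variables $y_i := x_i - a$ and then apply Cauchy--Schwarz. First I would note the variance identity
\[
b - a^2 \;=\; \frac{1}{n}\sum_{i=1}^{n} x_i^2 - a^2 \;=\; \frac{1}{n}\sum_{i=1}^{n}(x_i - a)^2 \;=\; \frac{1}{n}\sum_{i=1}^{n} y_i^2,
\]
so that the target bound $\max_k (x_k-a)^2 \leq (n-1)(b-a^2)$ becomes the equivalent assertion
\[
\max_{1\leq k\leq n} y_k^2 \;\leq\; \frac{n-1}{n}\sum_{i=1}^{n} y_i^2.
\]
Observe also that by definition of $a$ we have the key constraint $\sum_{i=1}^{n} y_i = 0$.

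Next, without loss of generality assume the maximum is attained at $k=1$. Using $\sum y_i = 0$ gives $y_1 = -\sum_{i=2}^{n} y_i$, and then the Cauchy--Schwarz inequality applied to the $n-1$ remaining coordinates yields
\[
y_1^2 \;=\; \Bigl(\sum_{i=2}^{n} y_i\Bigr)^{2} \;\leq\; (n-1)\sum_{i=2}^{n} y_i^2 \;=\; (n-1)\Bigl(\sum_{i=1}^{n} y_i^2 - y_1^2\Bigr).
\]
Rearranging gives $n\, y_1^2 \leq (n-1)\sum_{i=1}^{n} y_i^2$, i.e.\ $y_1^2 \leq \frac{n-1}{n}\sum_i y_i^2 = (n-1)(b-a^2)$, which is exactly what was needed after taking square roots.

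I do not expect a real obstacle: the only substantive step is the Cauchy--Schwarz estimate on the $n-1$ non-extremal terms, and it is made available precisely by the zero-sum constraint on the $y_i$. A final remark worth including is that equality in Cauchy--Schwarz here forces $y_2 = \cdots = y_n$, giving the well-known extremal configuration in which one point is far from the mean and the remaining $n-1$ are equal.
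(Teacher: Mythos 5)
Your argument is correct, and it is essentially the same approach as the paper's: Theorem \ref{Th1} is quoted there from \cite{CIPU} without proof, as the special case $p=2$, $\alpha_k=\frac{1}{n}$ of Theorem \ref{Th2}, and the proof the paper does supply for the parallel Theorem \ref{Th4} is exactly your computation --- set $y_i=x_i-a$, use $\sum_{i=1}^{n}y_i=0$, bound the extremal term via H\"older's inequality (which at $p=2$ with equal weights is your Cauchy--Schwarz step), and rearrange the resulting $y_1^2\leq (n-1)\bigl(\sum_{i=1}^{n}y_i^2-y_1^2\bigr)$. Your closing remark on the equality configuration is a small addition the paper does not make, but otherwise there is nothing to change.
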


Theorem \ref{Th1} is a special case of the following Theorem \ref{Th2} for $%
p=2,$ $\ \alpha _{k}=\frac{1}{n},$ \ $k=1,...,n$.

\begin{theorem}
\label{Th2} \cite{ABP} Let $n>1$ be an integer and $x_{1},x_{2},...,x_{n}$
be positive real numbers. Denote \ $a=\sum_{i=1}^{n}\alpha _{i}x_{i}$ \ and
\ $c=\sum_{i=1}^{n}\alpha _{i}x_{i}^{p},$ \ where \ $0<\alpha _{i}<1,$ $\
i=1,...,n\ \ \ \sum \alpha _{i}=1,$ \ $p\geq 2,$ then \ \ 
\begin{equation*}
\ \underset{1\leq k\leq n}{\max }\left\{ \left\vert x_{k}-a\right\vert
\right\} \leq T\left( c-a^{p}\right) ^{\frac{1}{p}}
\end{equation*}%
where 
\begin{equation*}
T=\frac{\left( 1-\alpha _{0}\right) ^{1-\frac{1}{p}}}{\alpha _{0}^{\frac{1}{p%
}}\left( \alpha _{0}^{p-1}+\left( 1-\alpha _{0}\right) ^{p-1}\right) ^{\frac{%
1}{p}}},\qquad \alpha _{0}=\underset{1\leq k\leq n}{\min }\left( \alpha
_{k}\right) .
\end{equation*}
\end{theorem}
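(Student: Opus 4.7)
The plan is to exploit the fact that $t \mapsto t^p$ is superquadratic on $[0,\infty)$ for $p \geq 2$ (using $C_x = px^{p-1}$ in Definition \ref{Def1}), then combine this with the constraint $\sum \alpha_i (x_i - a) = 0$ and a power-mean estimate to isolate the index attaining the maximum deviation.

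First, applying Remark \ref{Rem1} to $\varphi(t) = t^p$ with weights $\alpha_i$ and points $x_i$ yields
\begin{equation*}
a^{p} \;\leq\; \sum_{i=1}^{n}\alpha_i\bigl(x_i^{p} - |x_i - a|^{p}\bigr),
\end{equation*}
so $c - a^{p} \geq \sum_{i=1}^{n}\alpha_i\,|x_i - a|^{p}$. Let $k^{\ast}$ be an index where $|x_k - a|$ attains its maximum value $M$, and set $\beta = \alpha_{k^{\ast}}$. Splitting off the $k^{\ast}$ term gives
\begin{equation*}
c - a^{p} \;\geq\; \beta M^{p} + \sum_{i\neq k^{\ast}} \alpha_i\,|x_i - a|^{p}.
\end{equation*}
Since $\sum_{i}\alpha_i(x_i-a)=0$, we have $\sum_{i\neq k^{\ast}} \alpha_i(x_i - a) = -\beta(x_{k^{\ast}}-a)$, hence $\sum_{i\neq k^{\ast}}\alpha_i|x_i - a| \geq \beta M$. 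Applying the power-mean (Jensen) inequality to $t^{p}$ with the renormalized weights $\alpha_i/(1-\beta)$ on $i \neq k^{\ast}$ yields
\begin{equation*}
\sum_{i\neq k^{\ast}} \alpha_i\,|x_i - a|^{p} \;\geq\; \frac{\bigl(\sum_{i\neq k^{\ast}}\alpha_i|x_i-a|\bigr)^{p}}{(1-\beta)^{p-1}} \;\geq\; \frac{\beta^{p} M^{p}}{(1-\beta)^{p-1}}.
\end{equation*}
Combining, $c - a^{p} \geq \beta M^{p}\bigl[1 + \beta^{p-1}/(1-\beta)^{p-1}\bigr]$, and solving for $M$ gives $M \leq g(\beta)\,(c-a^{p})^{1/p}$, where
\begin{equation*}
g(\beta) \;=\; \frac{(1-\beta)^{1-1/p}}{\beta^{1/p}\bigl(\beta^{p-1}+(1-\beta)^{p-1}\bigr)^{1/p}}.
\end{equation*}

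It remains to replace $\beta = \alpha_{k^{\ast}}$ by $\alpha_0 = \min_k \alpha_k$; since $\beta \geq \alpha_0$, it suffices to verify that $g$ is decreasing on $(0,1)$. I expect the cleanest way is to show instead that $g(\beta)^{-p} = \beta + \beta^{p}/(1-\beta)^{p-1}$ is increasing, which reduces to checking that its derivative $1 + \beta^{p-1}(p - \beta)/(1-\beta)^{p}$ is positive on $(0,1)$; this is manifest termwise. Then $g(\beta) \leq g(\alpha_0) = T$, and the theorem follows.

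The main obstacle is the two-step estimate on the off-$k^{\ast}$ sum: the trick is recognizing that the constraint $\sum \alpha_i(x_i - a) = 0$ forces a lower bound on $\sum_{i\neq k^{\ast}} \alpha_i|x_i - a|$ (not just on any single term), which is what lets the power-mean inequality give the sharp $\beta^{p}/(1-\beta)^{p-1}$ contribution. A minor technical point is checking that $t^{p}$ genuinely satisfies \eqref{1.1} with $C_x = px^{p-1}$ for $p \geq 2$ on $[0,\infty)$; this reduces to the elementary inequality $y^{p} - x^{p} - px^{p-1}(y-x) \geq |y-x|^{p}$, which one can verify by taking $x=0$ first and then reducing the general case.
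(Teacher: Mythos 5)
Your proof is correct and follows essentially the same route as the paper's own argument (given there for the companion Theorem \ref{Th4} with modulus $mx^{p}$): a Jensen-type bound $c-a^{p}\geq \sum_{i=1}^{n}\alpha _{i}\left\vert x_{i}-a\right\vert ^{p}$, then splitting off the maximal index and using the zero-sum constraint together with a H\"{o}lder/power-mean estimate to get the $\beta ^{p-1}/\left( 1-\beta \right) ^{p-1}$ contribution, and finally monotonicity in $\beta $ to pass to $\alpha _{0}$. The only differences are cosmetic: you derive the first inequality from superquadracity of $x^{p}$ (Remark \ref{Rem1}) rather than uniform convexity (Remark \ref{Rem2}), you track the maximizing index $k^{\ast }$ explicitly instead of taking it WLOG to be $n$, and you actually verify the monotonicity claim (that $\beta +\beta ^{p}/\left( 1-\beta \right) ^{p-1}$ is increasing), which the paper asserts without computation.
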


In \cite[Inequality (8)]{VK} the following is proved for Mean Value:

\begin{theorem}
\label{Th3}\textbf{\ }Let\textbf{\ }$y_{1}\leq y_{2}\leq ,...,y_{n}$ be $n$
real numbers, then for any positive integer $r$\ the inequality 
\begin{equation*}
y_{\left( k,j\right) }\leq \overline{y}+\left( \frac{n\left( n-j\right)
^{2r-1}g_{2r}}{j^{2r}+j\left( n-j\right) ^{2r-1}}\right) ^{\frac{1}{2r}}
\end{equation*}%
holds, for 
\begin{eqnarray*}
g_{2r} &=&\frac{1}{n}\sum_{i=1}^{n}\left\vert y_{i}-\overline{y}\right\vert
^{2r},\quad \overline{y}=\frac{1}{n}\sum_{i=1}^{n}y_{i},\quad y_{\left(
k,j\right) }=\sum_{i=k}^{j}\frac{y_{i}}{j-k+1}, \\
1 &\leq &k\leq j\leq n.
\end{eqnarray*}
\end{theorem}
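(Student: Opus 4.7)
The plan is to reduce the claim to a one-line application of Jensen's inequality (equivalently, the power-mean inequality) for the convex function $t\mapsto t^{2r}$, applied separately on the averaging block and on its complement, after recentring about $\overline{y}$.

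First I would set $u_i := y_i-\overline{y}$, so that $\sum_{i=1}^{n}u_i=0$ and $\sum_{i=1}^{n}u_i^{2r}=ng_{2r}$, and introduce the block $S:=\{k,k+1,\dots,j\}$ of size $s:=j-k+1$. The quantity to estimate is then
\[
y_{(k,j)}-\overline{y}=\mu_S:=\frac{1}{s}\sum_{i\in S}u_i,
\]
and the sum-zero constraint forces $\mu_{S^c}=-\tfrac{s}{n-s}\mu_S$ for the complementary mean.

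Next I would apply Jensen to each block to obtain $\sum_{i\in S}u_i^{2r}\ge s\,\mu_S^{2r}$ and $\sum_{i\notin S}u_i^{2r}\ge(n-s)\mu_{S^c}^{2r}=\frac{s^{2r}}{(n-s)^{2r-1}}\mu_S^{2r}$. Adding the two bounds yields
\[
ng_{2r}\ \ge\ \mu_S^{2r}\cdot\frac{s(n-s)^{2r-1}+s^{2r}}{(n-s)^{2r-1}},
\]
and solving for $\mu_S^{2r}$ and taking $2r$-th roots gives exactly the bound in the statement (with $s$ in the role of $j$). Equality in Jensen for both blocks corresponds to the two-level extremal configuration in which $u_i\equiv\mu_S$ on $S$ and $u_i\equiv\mu_{S^c}$ on $S^c$, which is consistent with $y_1\le\cdots\le y_n$ precisely when $S$ is a tail block.

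The main subtlety is matching the size parameter: the clean Jensen computation above produces the bound naturally in terms of $s=j-k+1$, the cardinality of the averaging block, whereas the statement writes $j$ in both $j^{2r}$ and $(n-j)^{2r-1}$. I would check the cited source \cite{VK} to confirm whether $j$ there plays the role of block size (so that implicitly $s=j$), or whether the theorem is only nontrivial when specialised to $k=1$ (again giving $s=j$); in the ordered setting the largest size-$s$ block average is attained by the top $s$ entries, so this is the only extremal case that the inequality really controls, and the rest follow by monotonicity $y_{(k,j)}\le y_{(n-s+1,n)}$.
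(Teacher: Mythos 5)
Your blockwise Jensen argument is essentially the paper's own method: Theorem \ref{Th3} is quoted from \cite{VK} without proof, and the paper's Theorem \ref{Th6} proves the general weighted version by exactly your decomposition --- convexity applied separately on the block $\{k,\dots,j\}$ and its complement, with the zero-sum constraint turning the complementary mean into $-\frac{s}{n-s}\mu_S$ --- and with the block mass $\sum_{i=k}^{j}t_i$ (your $s/n$) appearing in the denominator, confirming that the natural parameter is the block size, as you suspected. Your closing caution is also vindicated by the paper: the $j$-form of the bound is the specialization $k=1$ (case C of Theorem \ref{Th6}, stated there for decreasingly ordered data, where the monotonicity $x_{(k,j)}\le x_{(1,j)}$ goes the right way), whereas for $k>1$ with increasing order the bound as transcribed would be too strong (e.g.\ $k=j=n$ would force $y_n\le\overline{y}$), so your block-size version with $s=j-k+1$ is the correct general statement.
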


In Section 2 after Teorems \ref{Th4} and \ref{Th5} we obtain in Theorem \ref%
{Th6} the main results where we extend Theorem \ref{Th3} for any real number 
$r\geq 1$ and for general positive coefficients. Then, in Theorem \ref{Th7}
we extend the results of Theorem \ref{Th6} to coefficients which are not
necessarily all nonnegative using Jensen-Steffensen inequality (see for
instance \cite[inequalities 1.1 and 1.2]{ABMP}).$\ \ $

\section{\textbf{Results: On deviations from Mean Values.}}

The new results proved in this section are theorems \ref{Th6} and \ref{Th7}.

For the benefit of the reader, we repeat theorems \ref{Th4} and \ref{Th5}
and their proofs which also appear in \cite[Theorems 12 and 13]{A}. Theorems %
\ref{Th4} and \ref{Th5} are special cases of Theorems \ref{Th6} and \ref{Th7}
but are proved in a different way..

\begin{theorem}
\label{Th4} Let $f$ be an uniformly convex function on the interval $\left[
a,b\right] $\ with modulus $\Phi \left( x\right) =mx^{p},$ $p\geq 1,$ $m>0$.
Let $n>1$ be an integer and $x_{i}\in \left[ a,b\right] ,$ $i=1,...,n$.
Denote \ $a=\sum_{i=1}^{n}\alpha _{i}x_{i}$ \ and \ $c=\sum_{i=1}^{n}\alpha
_{i}f\left( x_{i}\right) ,$ \ where \ $0<\alpha _{i}<1,$ $\ i=1,...,n\ \ \
\sum_{i=1}^{n}\alpha _{i}=1$. Then \ \ 
\begin{equation}
\ \underset{1\leq k\leq n}{\max }\left\{ \left\vert x_{k}-a\right\vert
\right\} \leq T\left( \sum_{i=1}^{n}\alpha _{i}\left\vert x_{i}-a\right\vert
\right) ^{\frac{1}{p}}\leq Tm^{-\frac{1}{p}}\left( c-f\left( a\right)
\right) ^{\frac{1}{p}}  \label{2.1}
\end{equation}%
where 
\begin{equation*}
T=\frac{\left( 1-\alpha _{0}\right) ^{1-\frac{1}{p}}}{\alpha _{0}^{\frac{1}{p%
}}\left( \alpha _{0}^{p-1}+\left( 1-\alpha _{0}\right) ^{p-1}\right) ^{\frac{%
1}{p}}},\qquad \alpha _{0}=\underset{1\leq k\leq n}{\min }\left( \alpha
_{k}\right) .
\end{equation*}
\end{theorem}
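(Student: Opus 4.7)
The plan is to prove the two inequalities in (\ref{2.1}) separately. I read the middle quantity as $T\bigl(\sum_{i}\alpha_{i}|x_{i}-a|^{p}\bigr)^{1/p}$ (the power $p$ inside the sum appears to be missing in the displayed statement), since this is the only reading that makes the chain of inequalities work dimensionally.

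First I would handle the rightmost inequality, which is a direct application of uniform convexity. By the discrete Jensen-type inequality for uniformly convex functions quoted in Remark \ref{Rem2}, applied to the modulus $\Phi(t)=mt^{p}$,
\begin{equation*}
f(a)=f\Big(\sum_{i=1}^{n}\alpha_{i}x_{i}\Big)\leq \sum_{i=1}^{n}\alpha_{i}f(x_{i})-m\sum_{i=1}^{n}\alpha_{i}|x_{i}-a|^{p},
\end{equation*}
so $\sum_{i}\alpha_{i}|x_{i}-a|^{p}\leq m^{-1}(c-f(a))$. Taking $p$-th roots and multiplying by $T$ gives the second inequality.

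Next I would prove the first inequality by a purely arithmetic argument (no convexity involved). Let $k$ be an index where $M:=|x_{k}-a|$ is maximal, and write $S=\sum_{i}\alpha_{i}|x_{i}-a|^{p}$. The identity $\sum_{i}\alpha_{i}(x_{i}-a)=0$ together with the triangle inequality yields $\alpha_{k}M\leq \sum_{i\neq k}\alpha_{i}|x_{i}-a|$. Applying H\"older's inequality with exponents $p$ and $p/(p-1)$ to the right-hand side, and bounding $\sum_{i\neq k}\alpha_{i}=1-\alpha_{k}\leq 1-\alpha_{0}$, gives
\begin{equation*}
\alpha_{k}M\leq (1-\alpha_{0})^{1-1/p}\bigl(S-\alpha_{k}M^{p}\bigr)^{1/p}.
\end{equation*}
Raising to the $p$-th power and collecting the $M^{p}$ terms produces
\begin{equation*}
M^{p}\,\alpha_{k}\bigl(\alpha_{k}^{p-1}+(1-\alpha_{0})^{p-1}\bigr)\leq (1-\alpha_{0})^{p-1}S.
\end{equation*}

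To finish, I would replace $\alpha_{k}$ by $\alpha_{0}$ on the left, using that the map $\beta\mapsto \beta(\beta^{p-1}+(1-\alpha_{0})^{p-1})=\beta^{p}+\beta(1-\alpha_{0})^{p-1}$ is increasing on $[\alpha_{0},1]$ for $p\geq 1$; taking $p$-th roots then delivers $M\leq T\,S^{1/p}$, which is the first inequality. The main obstacle is not deep: one has to arrange the H\"older bookkeeping so that excluding the maximizing index $k$ is what brings in the factor $(1-\alpha_{0})^{1-1/p}$ and so that the residual term $\alpha_{k}M^{p}$ can be folded back into the left-hand side to produce the exact constant $T$.
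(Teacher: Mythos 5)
Your proposal is correct and takes essentially the same route as the paper: the rightmost inequality via the Jensen-type inequality of Remark \ref{Rem2} with $\Phi(t)=mt^{p}$, and the first inequality via H\"older's inequality applied after excluding the maximizing index, followed by folding the $\alpha_{k}M^{p}$ term back into the left side; your reading of the middle term as $T\bigl(\sum_{i}\alpha_{i}|x_{i}-a|^{p}\bigr)^{1/p}$ correctly fixes what is indeed a typo in (\ref{2.1}). The only (immaterial) difference is bookkeeping: you bound $1-\alpha_{k}\leq 1-\alpha_{0}$ early and use monotonicity of $\beta\mapsto\beta^{p}+\beta(1-\alpha_{0})^{p-1}$, whereas the paper keeps $\alpha_{k}$ throughout and invokes the decreasingness of $\frac{(1-\alpha)^{p-1}}{\alpha(\alpha^{p-1}+(1-\alpha)^{p-1})}$ at the end.
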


\begin{example}
\label{Ex1} Let $\varphi \left( x\right) ,$ $x\geq 0$ be a convex function
satisfying $\varphi ^{^{\prime }}\left( 0\right) >0$. It is easy to verify
that the function $g\left( x\right) =x\varphi \left( x\right) $ when $%
x_{i}\in 
%TCIMACRO{\U{211d} }%
%BeginExpansion
\mathbb{R}
%EndExpansion
_{+}$ and $0\leq \alpha _{i}\leq 1,$ $i=1,...,n,\ \sum_{i=1}^{n}\alpha
_{i}=1 $, satisfies the inequality%
\begin{equation*}
\sum_{i=1}^{n}\alpha _{i}g\left( x_{i}\right) -g\left( \sum_{j=1}^{n}\alpha
_{j}x_{j}\right) \geq \varphi ^{^{\prime }}\left( 0\right) \left(
\sum_{i=1}^{n}\alpha _{i}\left( x_{i}-\sum_{j=1}^{n}\alpha _{j}x_{j}\right)
^{2}\right) ,
\end{equation*}%
that is, we get that\ $g\left( x\right) $\ is strongly convex, and therefore
uniformly convex. Hence, it satisfies Theorem \ref{Th4}.
\end{example}

\begin{proof}
(of Theorem \ref{Th4}) \ As $\Phi \left( x\right) =mx^{p}$ \ $p\geq 1$, $%
m>0, $\ then applying Remark \ref{Rem2}%
\begin{equation}
\sum_{i=1}^{n}\alpha _{i}f\left( x_{i}\right) -f\left( \sum_{k=1}^{n}\alpha
_{k}x_{k}\right) \geq m\sum_{i=1}^{n}\alpha _{i}\left( \left\vert
x_{i}-a\right\vert \right) ^{p}  \label{2.2}
\end{equation}%
holds.

Denote $y_{i}=x_{i}-a,$ \ $i=1,...,n.$ \ Then from $\sum_{i=1}^{n}\alpha
_{i}y_{i}=0$ \ we get from H\"{o}lder's inequality 
\begin{eqnarray*}
\left( \alpha _{n}\left\vert y_{n}\right\vert \right) ^{p} &=&\left(
\left\vert \sum_{i=1}^{n-1}\alpha _{i}y_{i}\right\vert \right) ^{p}\leq
\left( \sum_{i=1}^{n-1}\alpha _{i}\left\vert y_{i}\right\vert \right) ^{p} \\
&=&\left( \sum_{i=1}^{n-1}\alpha _{i}^{1-\frac{1}{p}}\left( \alpha
_{i}\left\vert y_{i}\right\vert ^{p}\right) ^{\frac{1}{p}}\right) ^{p}\leq
\left( \sum_{i=1}^{n-1}\alpha _{i}\right) ^{p-1}\sum_{i=1}^{n-1}\alpha
_{i}\left\vert y_{i}\right\vert ^{p},
\end{eqnarray*}%
and 
\begin{equation*}
\left( \alpha _{n}\left\vert y_{n}\right\vert \right) ^{p}\leq \left(
1-\alpha _{n}\right) ^{p-1}\left( \sum_{i=1}^{n}\alpha _{i}\left\vert
y_{i}\right\vert ^{p}-\alpha _{n}\left\vert y_{n}\right\vert ^{p}\right)
\leq \left( 1-\alpha _{n}\right) ^{p-1}\sum_{i=1}^{n}\alpha _{i}\left\vert
y_{i}\right\vert ^{p}.
\end{equation*}%
Therefore from (\ref{2.2}) 
\begin{equation*}
\alpha _{n}\left( \alpha _{n}^{p-1}+\left( 1-\alpha _{n}\right)
^{p-1}\right) \left\vert y_{n}\right\vert ^{p}\leq \left( 1-\alpha
_{n}\right) ^{p-1}m^{-1}\left( c-f\left( a\right) \right)
\end{equation*}%
which by taking into consideration that $\frac{\left( 1-\alpha \right) ^{p-1}%
}{\alpha \left( \alpha ^{p-1}+\left( 1-\alpha \right) ^{p-1}\right) }$\ is
decreasing for $0<\alpha <1$ leads to (\ref{2.1}).
\end{proof}

In Theorem \ref{Th5} we deal with uniformly convex functions with modulus $%
\Phi $\ that satisfies special properties:

\begin{theorem}
\label{Th5} Let $f$ $:\left[ a,b\right) \rightarrow 
%TCIMACRO{\U{211d} }%
%BeginExpansion
\mathbb{R}
%EndExpansion
,$ be an uniformly convex function with modulus $\Phi $, which satisfies \ $%
\Phi \left( AB\right) \leq \Phi \left( A\right) \Phi \left( B\right) $ \ for 
$A>0$, $B>0$.

Let $x_{i}\in \left[ a,b\right) ,$ $i=1,...,n$. Denote $a=\sum_{i=1}^{n}%
\frac{x_{i}}{n}$ \ and \ $d=\frac{1}{n}\sum_{i=1}^{n}f\left( x_{i}\right) $.
If $\Phi $ is convex\ then 
\begin{eqnarray}
\underset{1\leq k\leq n}{\max }\left( \Phi \left( \left\vert
x_{k}-a\right\vert \right) \right) &\leq &\frac{\Phi \left( n-1\right) n}{%
n-1+\Phi \left( n-1\right) }\sum_{i=1}^{n}\frac{1}{n}\Phi \left( \left\vert
x_{i}-a\right\vert \right)  \label{2.3} \\
&\leq &\frac{\Phi \left( n-1\right) n}{n-1+\Phi \left( n-1\right) }\left(
d-f\left( a\right) \right) .  \notag
\end{eqnarray}
\end{theorem}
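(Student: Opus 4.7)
The plan is to establish the two inequalities in (\ref{2.3}) separately. The second inequality is an immediate consequence of Remark \ref{Rem2} applied with the uniform weights $\lambda_r = 1/n$: it gives
\[
f(a) = f\Bigl(\sum_{r=1}^n \tfrac{1}{n} x_r\Bigr) \leq \sum_{r=1}^n \tfrac{1}{n}\bigl(f(x_r) - \Phi(|x_r - a|)\bigr) = d - \tfrac{1}{n}\sum_{i=1}^n \Phi(|x_i - a|),
\]
so $\tfrac{1}{n}\sum \Phi(|x_i - a|) \leq d - f(a)$, and multiplying by $\frac{\Phi(n-1)\,n}{n-1+\Phi(n-1)}$ gives the second inequality in (\ref{2.3}).

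For the first (main) inequality, I would set $y_i = x_i - a$, so that $\sum_{i=1}^n y_i = 0$. Without loss of generality, relabel so that $|y_n| = \max_{1\le k\le n}|y_k|$. The identity $y_n = -\sum_{i=1}^{n-1} y_i$ together with the triangle inequality gives $|y_n| \leq \sum_{i=1}^{n-1}|y_i|$, and since $\Phi$ is (taken to be) nondecreasing by Remark \ref{Rem2},
\[
\Phi(|y_n|) \leq \Phi\Bigl(\sum_{i=1}^{n-1}|y_i|\Bigr) = \Phi\Bigl((n-1)\cdot \tfrac{1}{n-1}\sum_{i=1}^{n-1}|y_i|\Bigr).
\]

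Now I would use the two structural hypotheses on $\Phi$ in succession: first submultiplicativity $\Phi(AB) \leq \Phi(A)\Phi(B)$ with $A = n-1$ and $B = \tfrac{1}{n-1}\sum_{i=1}^{n-1}|y_i|$, and then convexity of $\Phi$ (which yields $\Phi(\tfrac{1}{n-1}\sum |y_i|) \leq \tfrac{1}{n-1}\sum \Phi(|y_i|)$ by Jensen's inequality). Combining these,
\[
\Phi(|y_n|) \leq \frac{\Phi(n-1)}{n-1}\sum_{i=1}^{n-1}\Phi(|y_i|) = \frac{\Phi(n-1)}{n-1}\Bigl(\sum_{i=1}^n \Phi(|y_i|) - \Phi(|y_n|)\Bigr).
\]

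The final step is purely algebraic: solving for $\Phi(|y_n|)$ yields
\[
\Phi(|y_n|)\cdot\frac{n-1+\Phi(n-1)}{n-1} \leq \frac{\Phi(n-1)}{n-1}\sum_{i=1}^n \Phi(|y_i|),
\]
so $\Phi(|y_n|) \leq \frac{\Phi(n-1)}{n-1+\Phi(n-1)}\sum_{i=1}^n\Phi(|x_i-a|) = \frac{\Phi(n-1)\,n}{n-1+\Phi(n-1)}\cdot\tfrac{1}{n}\sum \Phi(|x_i-a|)$, which is the first inequality in (\ref{2.3}). I don't expect any real obstacles here: the only subtle point is to remember to invoke the increasing modulus from Remark \ref{Rem2} when passing from $|y_n|\leq \sum|y_i|$ to the $\Phi$-inequality, and to split the factor $n-1$ off before applying Jensen's inequality so that the submultiplicativity hypothesis is genuinely used.
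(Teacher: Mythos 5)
Your proposal is correct and takes essentially the same route as the paper: the second inequality comes from Remark \ref{Rem2} with weights $\tfrac{1}{n}$, and the first from $y_n=-\sum_{i=1}^{n-1}y_i$, monotonicity of $\Phi$, the factorization $(n-1)\cdot\tfrac{1}{n-1}\sum_{i=1}^{n-1}\left\vert y_i\right\vert$, submultiplicativity plus convexity, and the identical closing algebra. The only (cosmetic) difference is the order of the middle steps: you apply $\Phi(AB)\leq \Phi(A)\Phi(B)$ first and then Jensen's inequality for the convex $\Phi$, whereas the paper writes $\left\vert y_i\right\vert =\Phi^{-1}\left( \Phi\left( \left\vert y_i\right\vert \right) \right)$, uses concavity of $\Phi^{-1}$, and only then submultiplicativity --- equivalent arguments, with yours marginally cleaner since it never needs to invoke $\Phi^{-1}$.
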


\begin{proof}
\ The function $f$ \ is uniformly convex with $\Phi $ defined on $\left[
a,b\right) ,$ therefore from Remark \ref{Rem2} for $\lambda _{i}=\frac{1}{n}%
, $ \ $i=1,...,n$%
\begin{equation}
\sum_{i=1}^{n}\frac{f\left( x_{i}\right) }{n}-f\left( \frac{%
\sum_{i=1}^{n}x_{i}}{n}\right) \geq \frac{1}{n}\sum_{i=1}^{n}\Phi \left(
\left\vert x_{i}-\frac{\sum_{i=1}^{n}x_{i}}{n}\right\vert \right)
\label{2.4}
\end{equation}%
holds.

In other words for $y_{i}=x_{i}-a,$ $i=1,...,n,$ 
\begin{equation}
\frac{1}{n}\sum_{i=1}^{n}\Phi \left( \left\vert y_{i}\right\vert \right)
\leq d-f\left( a\right)  \label{2.5}
\end{equation}%
holds.

From \ $\sum_{i=1}^{n}y_{i}=0$ \ we get that $\ \left\vert y_{n}\right\vert
=\left\vert -\sum_{i=1}^{n-1}y_{i}\right\vert .$ As $\Phi $ is positive, as
it is given that $\Phi $\ is convex too and according to Remark \ref{Rem2}, $%
\Phi $ is increasing and $\Phi \left( 0\right) =0$, therefore, \ \ 
\begin{eqnarray}
\Phi \left( \left\vert y_{n}\right\vert \right) &=&\Phi \left( \left\vert
-\sum_{i=1}^{n-1}y_{i}\right\vert \right) \leq \Phi \left(
\sum_{i=1}^{n-1}\left\vert y_{i}\right\vert \right)  \label{2.6} \\
&=&\Phi \left( \left( n-1\right) \sum_{i=1}^{n-1}\frac{\Phi ^{-1}\left( \Phi
\left( \left\vert y_{i}\right\vert \right) \right) }{n-1}\right)  \notag \\
&\leq &\Phi \left( \left( n-1\right) \Phi ^{-1}\left( \frac{%
\sum_{i=1}^{n-1}\Phi \left( \left\vert y_{i}\right\vert \right) }{n-1}%
\right) \right) .  \notag
\end{eqnarray}%
Indeed, the left side inequality is because $\Phi $ is increasing and the
right side inequality follows because $\Phi ^{-1}$ is concave and $\Phi $ $\ 
$is increasing.

As $\Phi $ satisfies also $\Phi \left( AB\right) \leq \Phi \left( A\right)
\Phi \left( B\right) $ we get that 
\begin{equation}
\Phi \left( \left( n-1\right) \Phi ^{-1}\left( \frac{\sum_{i=1}^{n-1}\Phi
\left( \left\vert y_{i}\right\vert \right) }{n-1}\right) \right) \leq \frac{%
\Phi \left( n-1\right) }{n-1}\sum_{i=1}^{n-1}\Phi \left( \left\vert
y_{i}\right\vert \right)  \label{2.7}
\end{equation}%
and from (\ref{2.6}) and (\ref{2.7}) we obtain 
\begin{equation*}
\Phi \left( \left\vert y_{n}\right\vert \right) \leq \frac{\Phi \left(
n-1\right) }{n-1}\left( \sum_{i=1}^{n-1}\Phi \left( \left\vert
y_{i}\right\vert \right) -\Phi \left( \left\vert y_{n}\right\vert \right)
\right) .
\end{equation*}%
From the last inequality as $\Phi (x)$ is positive and\ increasing, together
with (\ref{2.5}) 
\begin{equation*}
\frac{n-1+\Phi \left( n-1\right) }{n-1}\Phi \left( \left\vert
y_{n}\right\vert \right) \leq \frac{\Phi \left( n-1\right) }{n-1}%
\sum_{i=1}^{n}\Phi \left( \left\vert y_{i}\right\vert \right) \leq \frac{%
\Phi \left( n-1\right) }{n-1}n\left( d-f\left( a\right) \right)
\end{equation*}%
holds, which is equivalent to (\ref{2.3}).
\end{proof}

Up to now we dealt with evaluating the $\max \left\vert
x_{k}-\sum_{i=1}^{n}t_{i}x_{i}\right\vert ,$ $k=1,...,n$. Now we evaluate $%
\left\vert x_{k,j}-\overline{x}\right\vert $. \ 

Applying the technique used in the proof of Theorem \ref{Th3} we get the
following theorem which deals with more general cases than in Theorem \ref%
{Th3} and refers also to the superquadratic functions $x^{p},$ $p\geq 2,$ $%
x\geq 0$ and to uniformly convex functions with modulus $\Phi =x^{p},$ $%
p\geq 2,$ $x\geq 0$, and general positive coefficients whereas Theorem \ref%
{Th3} deals only with even $p$ and fixed coefficients. Moreover, in Theorem %
\ref{Th7} we show that we can relax even more the conditions on the
coefficients.

\begin{theorem}
\label{Th6} Let $y_{i}$ $i=1,...,n$\ be real numbers and $%
\sum_{i=1}^{n}t_{i}y_{i}=0$. Let $t_{i}>0,$ $i=1,...,n$ satisfy $%
\sum_{i=1}^{n}t_{i}=1$, and let $r\geq 1$ be a real number, then:

A)%
\begin{eqnarray}
&&  \label{2.8} \\
\left\vert \sum_{i=k}^{j}\frac{t_{i}y_{i}}{\sum_{v=k}^{j}t_{v}}\right\vert
&\leq &\left( \frac{\sum_{i=1}^{n}t_{i}\left\vert y_{i}\right\vert ^{2r}}{%
\left( \sum_{i=k}^{j}t_{i}\right) +\left( \sum_{i=k}^{j}t_{i}\right)
^{2r}\left( 1-\sum_{i=k}^{j}t_{i}\right) ^{1-2r}}\right) ^{\frac{1}{2r}}. 
\notag
\end{eqnarray}%
B)\ \ If in addition $y_{i}=x_{i}-\overline{x},$ $i=1,...,n$,\ and $%
\overline{x}=\sum_{i=1}^{n}t_{i}x_{i}$,\ then .%
\begin{equation}
\left\vert x_{k,j}-\overline{x}\right\vert \leq \left( \frac{%
\sum_{i=1}^{n}t_{i}\left\vert x_{i}-\overline{x}\right\vert ^{2r}}{\left(
\sum_{i=k}^{j}t_{i}\right) +\left( \sum_{i=k}^{j}t_{i}\right) ^{2r}\left(
1-\sum_{i=k}^{j}t_{i}\right) ^{1-2r}}\right) ^{\frac{1}{2r}},  \label{2.9}
\end{equation}%
where $x_{\left( k,j\right) }=\sum_{i=k}^{j}\frac{x_{i}}{\sum_{v=k}^{j}t_{v}}
$.

C) \ If also $x_{1}\geq x_{2}\geq ,...,\geq x_{n}$, it follows that $x_{1,j}$
is decreasing with $j,$ $j=1,...,n$, when $t_{i}=\frac{1}{n},$ $i=1,...,n,$ $%
\overline{x}=\frac{1}{n}\sum_{i=1}^{n}x_{i}$, and then 
\begin{equation}
x_{1,j}\leq \overline{x}+\left( \frac{\sum_{i=1}^{n}\frac{1}{n}\left\vert
x_{i}-\overline{x}\right\vert ^{2r}}{\left( \sum_{i=1}^{j}\frac{1}{n}\right)
+\left( \sum_{i=1}^{j}\frac{1}{n}\right) ^{2r}\left( 1-\sum_{i=1}^{j}\frac{1%
}{n}\right) ^{1-2r}}\right) ^{\frac{1}{2r}}.  \label{2.10}
\end{equation}%
holds.
\end{theorem}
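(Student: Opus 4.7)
The strategy is the H\"older-type splitting used in the proof of Theorem~\ref{Th3}: the hypothesis $\sum_{i=1}^{n}t_{i}y_{i}=0$ gives the identity
\[
\sum_{i=k}^{j}t_{i}y_{i}=-\sum_{i\notin [k,j]}t_{i}y_{i},
\]
so the same quantity $S:=\sum_{i=k}^{j}t_{i}y_{i}$ can be estimated in two complementary ways and the two bounds combined.

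For part A, write $T=\sum_{i=k}^{j}t_{i}$. Apply H\"older's inequality with conjugate exponents $2r$ and $2r/(2r-1)$ to each of the two partial sums, obtaining
\[
|S|^{2r}\leq T^{2r-1}\!\!\sum_{i=k}^{j}t_{i}|y_{i}|^{2r}\qquad\text{and}\qquad |S|^{2r}\leq (1-T)^{2r-1}\!\!\sum_{i\notin [k,j]}t_{i}|y_{i}|^{2r}.
\]
Dividing the first by $T^{2r-1}$ and the second by $(1-T)^{2r-1}$ and adding yields
\[
|S|^{2r}\bigl(T^{1-2r}+(1-T)^{1-2r}\bigr)\leq \sum_{i=1}^{n}t_{i}|y_{i}|^{2r}.
\]
Dividing through by $T^{2r}$ and using $T^{2r}\bigl(T^{1-2r}+(1-T)^{1-2r}\bigr)=T+T^{2r}(1-T)^{1-2r}$ gives exactly (\ref{2.8}).

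Part B is immediate: choosing $y_{i}=x_{i}-\overline{x}$ with $\overline{x}=\sum_{i=1}^{n}t_{i}x_{i}$ forces $\sum t_{i}y_{i}=0$, and the left-hand side of (\ref{2.8}) becomes $|x_{k,j}-\overline{x}|$, whence (\ref{2.9}). For part C with $t_{i}=1/n$, one has $x_{1,j}=\tfrac{1}{j}\sum_{i=1}^{j}x_{i}$ and a direct computation gives
\[
x_{1,j+1}-x_{1,j}=\frac{x_{j+1}-x_{1,j}}{j+1},
\]
which is non-positive because $x_{1,j}$ is an average of values each at least $x_{j+1}$ under the assumed ordering; so $x_{1,j}$ is decreasing in $j$. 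The inequality (\ref{2.10}) is then the $k=1$, $t_{i}=1/n$ specialisation of (\ref{2.9}) with $\overline{x}$ transposed to the right-hand side.

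I do not expect any genuine obstacle: the argument rests only on H\"older and the zero-sum symmetry. The one step that demands care is matching the denominator, namely verifying the identity $T^{2r}\bigl(T^{1-2r}+(1-T)^{1-2r}\bigr)=T+T^{2r}(1-T)^{1-2r}$, so that the bound emerges in the exact form stated rather than as the symmetric but unsimplified reciprocal sum in $T$ and $1-T$.
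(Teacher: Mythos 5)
Your proof is correct and follows essentially the same route as the paper: the zero-sum identity reduces everything to the block $[k,j]$ versus its complement, each block average is bounded by its weighted $2r$-th power mean, and the two estimates combine to produce the denominator $\left(\sum_{i=k}^{j}t_{i}\right)+\left(\sum_{i=k}^{j}t_{i}\right)^{2r}\left(1-\sum_{i=k}^{j}t_{i}\right)^{1-2r}$. Your single H\"older step is exactly the composition of the paper's two Jensen applications (convexity of $x^{r}$ and of $x^{2}$), and your handling of parts B and C matches the paper's as well, with the computation $x_{1,j+1}-x_{1,j}=\left(x_{j+1}-x_{1,j}\right)/\left(j+1\right)$ simply making explicit what the paper calls ``easy to verify''.
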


\begin{proof}
\bigskip

\begin{equation*}
\sum_{i=1}^{n}t_{i}\left\vert y_{i}\right\vert
^{2r}=\sum_{i=1}^{n}t_{i}\left( \left\vert y_{i}\right\vert ^{2}\right)
^{r}\qquad \qquad \qquad \qquad \qquad \qquad \qquad
\end{equation*}%
\begin{equation*}
=\left( \sum_{i=k}^{j}t_{i}\right) \left( \sum_{i=k}^{j}\frac{t_{i}\left(
\left\vert y_{i}\right\vert ^{2}\right) ^{r}}{\sum_{v=k}^{j}t_{v}}\right)
+\left( 1-\sum_{i=k}^{j}t_{i}\right) \left( \sum_{i=1,i\neq k,...,j}^{n}%
\frac{t_{i}\left( \left\vert y_{i}\right\vert ^{2}\right) ^{r}}{%
1-\sum_{v=k}^{j}t_{v}}\right)
\end{equation*}%
\begin{equation*}
\geq \left( \sum_{i=k}^{j}t_{i}\right) \left( \sum_{i=k}^{j}\frac{%
t_{i}\left\vert y_{i}\right\vert ^{2}}{\sum_{v=k}^{j}t_{v}}\right)
^{r}+\left( 1-\sum_{i=k}^{j}t_{i}\right) \left( \sum_{i=1,i\neq k,...,j}^{n}%
\frac{t_{i}\left\vert y_{i}\right\vert ^{2}}{1-\sum_{v=k}^{j}t_{v}}\right)
^{r}
\end{equation*}%
\begin{equation*}
\geq \left( \sum_{i=k}^{j}t_{i}\right) \left( \sum_{i=k}^{j}\left( \frac{%
t_{i}y_{i}}{\sum_{v=k}^{j}t_{v}}\right) ^{2}\right) ^{r}+\left(
1-\sum_{i=k}^{j}t_{i}\right) \left( \sum_{i=1,i\neq k,...,j}^{n}\left( \frac{%
t_{i}y_{i}}{1-\sum_{v=k}^{j}t_{v}}\right) ^{2}\right) ^{r}
\end{equation*}%
\begin{equation*}
=\left( \sum_{i=k}^{j}t_{i}\right) \left( \left( \sum_{i=k}^{j}\frac{%
t_{i}y_{i}}{\sum_{v=k}^{j}t_{v}}\right) ^{2}\right) ^{r}+\left(
1-\sum_{i=k}^{j}t_{i}\right) \left( \left( -\sum_{i=k}^{j}\frac{t_{i}y_{i}}{%
1-\sum_{v=k}^{j}t_{v}}\right) ^{2}\right) ^{r}
\end{equation*}%
\begin{equation*}
=\left( \sum_{i=k}^{j}t_{i}\right) \left( \left( \sum_{i=k}^{j}\frac{%
t_{i}y_{i}}{\sum_{v=k}^{j}t_{v}}\right) ^{2}\right) ^{r}\qquad \qquad \qquad
\qquad \qquad \qquad
\end{equation*}%
\begin{eqnarray*}
&&+\left( 1-\sum_{i=k}^{j}t_{i}\right) ^{1-2r}\left(
\sum_{i=k}^{j}t_{i}\right) ^{2r}\left( \left( \sum_{i=k}^{j}\frac{t_{i}y_{i}%
}{\sum_{v=k}^{j}t_{v}}\right) ^{2}\right) ^{r} \\
&=&\left( \left( \sum_{i=k}^{j}\frac{t_{i}y_{i}}{\sum_{v=k}^{j}t_{i}}\right)
^{2}\right) ^{r}\left[ \left( \sum_{i=k}^{j}t_{i}\right) +\left(
\sum_{i=k}^{j}t_{i}\right) ^{2r}\left( 1-\sum_{i=k}^{j}t_{i}\right) ^{1-2r}%
\right] ,
\end{eqnarray*}%
The first inequality results from convexity of $x^{r},$ $r\geq 1,$ $x\geq 0$%
, and the second inequality from the convexity of $x^{2}$.

Hence, 
\begin{eqnarray*}
&&\sum_{i=1}^{n}t_{i}\left\vert y_{i}\right\vert ^{2r} \\
&\geq &\left\vert \sum_{i=k}^{j}\frac{t_{i}y_{i}}{\sum_{v=k}^{j}t_{v}}%
\right\vert ^{2r}\left[ \left( \sum_{i=k}^{j}t_{i}\right) +\left(
\sum_{i=k}^{j}t_{i}\right) ^{2r}\left( 1-\sum_{i=k}^{j}t_{i}\right) ^{1-2r}%
\right] ,
\end{eqnarray*}%
that is, 
\begin{equation*}
\left\vert \sum_{i=k}^{j}\frac{t_{i}y_{i}}{\sum_{v=k}^{j}t_{v}}\right\vert
^{2r}\leq \frac{\sum_{i=1}^{n}t_{i}\left\vert y_{i}\right\vert ^{2r}}{\left[
\left( \sum_{i=k}^{j}t_{i}\right) +\left( \sum_{i=k}^{j}t_{i}\right)
^{2r}\left( 1-\sum_{i=k}^{j}t_{i}\right) ^{1-2r}\right] },
\end{equation*}%
from which (\ref{2.8}) follows, and the proof of case A is complete.

If in addition $y_{i}=x_{i}-\overline{x},$ $i=1,...,n$, then\ 
\begin{equation*}
\left\vert \sum_{i=k}^{j}\frac{t_{i}\left( x_{i}-\overline{x}\right) }{%
\sum_{v=k}^{j}t_{v}}\right\vert ^{2r}\leq \frac{\sum_{i=1}^{n}t_{i}\left%
\vert x_{i}-\overline{x}\right\vert ^{2r}}{\left[ \left(
\sum_{i=k}^{j}t_{i}\right) +\left( \sum_{i=k}^{j}t_{i}\right) ^{2r}\left(
1-\sum_{i=k}^{j}t_{i}\right) ^{1-2r}\right] }.
\end{equation*}%
\ and denoting 
\begin{equation*}
\sum_{i=k}^{j}\left( \frac{t_{i}\left( x_{i}\right) }{\sum_{v=k}^{j}t_{v}}%
\right) =x_{k,j}
\end{equation*}%
we get that%
\begin{equation*}
\left\vert x_{k,j}-\overline{x}\right\vert ^{2r}\leq \frac{%
\sum_{i=1}^{n}t_{i}\left\vert x_{i}-\overline{x}\right\vert ^{2r}}{\left[
\left( \sum_{i=k}^{j}t_{i}\right) +\left( \sum_{i=k}^{j}t_{i}\right)
^{2r}\left( 1-\sum_{i=k}^{j}t_{i}\right) ^{1-2r}\right] },
\end{equation*}%
from which (\ref{2.9}) follows, the proof of case B is complete.

If $x_{1}\geq x_{2},\geq ,...,\geq x_{n}$\ then, it is easy to verify that
when $t_{i}=\frac{1}{n},$ $i=1,..,n,$ $x_{1,j}$is decreasing, $x_{1,j}-%
\overline{x}\geq 0$ and (\ref{2.10}) holds, and case C) is proved and the
proof of the theorem is complete.
\end{proof}

Using Theorem \ref{Th6} it follows from Remark \ref{Rem1} regarding
superquadracity and Remark \ref{Rem2} regarding uniform convexity that:

\begin{corollary}
\label{Cor1} \textbf{a}. If $f\ $is the superquadratic funnction $f\left(
x\right) =x^{2r},$ $x\in \left[ 0,b\right) ,$ and $r\geq 1,$\ where $%
\overline{x}=\sum_{i=1}^{n}t_{i}x_{i}$, $x_{\left( k,j\right)
}=\sum_{i=k}^{j}\frac{x_{i}}{\sum_{v=k}^{j}t_{v}},$ $0\leq t_{i}\leq 1,$\ $%
\sum_{i=1}^{n}t_{i}=1$,\ then the inequalities%
\begin{eqnarray*}
\left\vert x_{k,j}-\overline{x}\right\vert &\leq &\left( \frac{%
\sum_{i=1}^{n}t_{i}\left\vert x_{i}-\overline{x}\right\vert ^{2r}}{\left(
\sum_{i=k}^{j}t_{i}\right) +\left( \sum_{i=k}^{j}t_{i}\right) ^{2r}\left(
1-\sum_{i=k}^{j}t_{i}\right) ^{1-2r}}\right) ^{\frac{1}{2r}} \\
&\leq &\left( \frac{\sum_{i=1}^{n}t_{i}x_{i}^{2r}-\left(
\sum_{i=1}^{n}t_{i}x_{i}\right) ^{2r}}{\left( \sum_{i=k}^{j}t_{i}\right)
+\left( \sum_{i=k}^{j}t_{i}\right) ^{2r}\left( 1-\sum_{i=k}^{j}t_{i}\right)
^{1-2r}}\right) ^{\frac{1}{2r}},
\end{eqnarray*}%
and if also $x_{1}\geq x_{2}\geq ,...,\geq x_{n}$, it follows that $x_{1,j}$
is decreasing with $j,$ $j=1,...,n$, when $t_{i}=\frac{1}{n},$ $i=1,...,n,$ $%
\overline{x}=\frac{1}{n}\sum_{i=1}^{n}x_{i}$, and then the inequalities 
\begin{eqnarray*}
x_{1,j} &\leq &\overline{x}+\left( \frac{\sum_{i=1}^{n}\frac{1}{n}\left\vert
x_{i}-\overline{x}\right\vert ^{2r}}{\left( \sum_{i=1}^{j}\frac{1}{n}\right)
+\left( \sum_{i=1}^{j}\frac{1}{n}\right) ^{2r}\left( 1-\sum_{i=1}^{j}\frac{1%
}{n}\right) ^{1-2r}}\right) ^{\frac{1}{2r}} \\
&\leq &\overline{x}+\left( \frac{\sum_{i=1}^{n}\frac{1}{n}x_{i}^{2r}-\left(
\sum_{i=1}^{n}\frac{1}{n}x_{i}\right) ^{2r}}{\left( \sum_{i=1}^{j}\frac{1}{n}%
\right) +\left( \sum_{i=1}^{j}\frac{1}{n}\right) ^{2r}\left( 1-\sum_{i=1}^{j}%
\frac{1}{n}\right) ^{1-2r}}\right) ^{\frac{1}{2r}}.
\end{eqnarray*}%
hold for the superquadratic function $f\left( x\right) =x^{2r},$ $r\geq 1$.

\textbf{b.} \ If $f:\left[ a,b\right) \rightarrow 
%TCIMACRO{\U{211d} }%
%BeginExpansion
\mathbb{R}
%EndExpansion
$ is uniformly convex with modulus $\Phi =x^{2r}$, $x\in \left[ 0,b-a\right)
,\ r\geq 1$, then we get the inequalities%
\begin{eqnarray*}
\left\vert x_{k,j}-\overline{x}\right\vert &\leq &\left( \frac{%
\sum_{i=1}^{n}t_{i}\left\vert x_{i}-\overline{x}\right\vert ^{2r}}{\left(
\sum_{i=k}^{j}t_{i}\right) +\left( \sum_{i=k}^{j}t_{i}\right) ^{2r}\left(
1-\sum_{i=k}^{j}t_{i}\right) ^{1-2r}}\right) ^{\frac{1}{2r}} \\
&\leq &\left( \frac{\sum_{i=1}^{n}t_{i}f\left( x_{i}\right) -f\left(
\sum_{i=1}^{n}t_{i}x_{i}\right) }{\left( \sum_{i=k}^{j}t_{i}\right) +\left(
\sum_{i=k}^{j}t_{i}\right) ^{2r}\left( 1-\sum_{i=k}^{j}t_{i}\right) ^{1-2r}}%
\right) ^{\frac{1}{2r}},
\end{eqnarray*}%
and if also $x_{1}\geq x_{2}\geq ,...,\geq x_{n}$, it follows that $x_{1,j}$
is decreasing with $j,$ $j=1,...,n$, when $t_{i}=\frac{1}{n},$ $i=1,...,n,$ $%
\overline{x}=\frac{1}{n}\sum_{i=1}^{n}x_{i}$, and then the inequalities 
\begin{eqnarray*}
x_{1,j} &\leq &\overline{x}+\left( \frac{\sum_{i=1}^{n}\frac{1}{n}\left\vert
x_{i}-\overline{x}\right\vert ^{2r}}{\left( \sum_{i=1}^{j}\frac{1}{n}\right)
+\left( \sum_{i=1}^{j}\frac{1}{n}\right) ^{2r}\left( 1-\sum_{i=1}^{j}\frac{1%
}{n}\right) ^{1-2r}}\right) ^{\frac{1}{2r}} \\
&\leq &\overline{x}+\left( \frac{\sum_{i=1}^{n}\frac{1}{n}f\left(
x_{i}\right) -f\left( \sum_{i=1}^{n}\frac{1}{n}x_{i}\right) }{\left(
\sum_{i=1}^{j}\frac{1}{n}\right) +\left( \sum_{i=1}^{j}\frac{1}{n}\right)
^{2r}\left( 1-\sum_{i=1}^{j}\frac{1}{n}\right) ^{1-2r}}\right) ^{\frac{1}{2r}%
}.
\end{eqnarray*}
\end{corollary}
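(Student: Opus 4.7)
The first inequality in each chain is already supplied by Theorem \ref{Th6}, part B) (and part C) in the monotone setting), so the only real work is to replace the numerator
\[
\sum_{i=1}^{n}t_{i}\left\vert x_{i}-\overline{x}\right\vert ^{2r}
\]
by the global Jensen-type gap that appears on the right. The plan is therefore to apply the two characterizations recalled earlier in the paper: Remark \ref{Rem1} for the superquadratic case and Remark \ref{Rem2} for the uniformly convex case, each producing exactly this replacement.

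For part (a), I would verify that $\varphi(x)=x^{2r}$ with $r\geq 1$ is superquadratic on $[0,b)$ (a standard example from \cite{AJS}) and then apply Remark \ref{Rem1} with $\lambda_{i}=t_{i}$ and $x_{i}$ as given. Rearranging Remark \ref{Rem1} yields
\[
\sum_{i=1}^{n}t_{i}\left\vert x_{i}-\overline{x}\right\vert ^{2r}
\leq \sum_{i=1}^{n}t_{i}x_{i}^{2r}-\Bigl(\sum_{i=1}^{n}t_{i}x_{i}\Bigr)^{2r},
\]
which, inserted into the bound of Theorem \ref{Th6}(B), gives the second inequality in the first displayed chain. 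For the second displayed chain I would first invoke Theorem \ref{Th6}(C) to obtain the monotonicity of $x_{1,j}$ in $j$ under $x_{1}\geq \cdots \geq x_{n}$ and $t_{i}=1/n$, and then apply the same superquadratic bound with uniform weights.

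For part (b), the argument is essentially identical, but with Remark \ref{Rem2} in place of Remark \ref{Rem1}: taking $\Phi(x)=x^{2r}$ in the uniformly convex Jensen-type inequality gives
\[
\sum_{i=1}^{n}t_{i}\left\vert x_{i}-\overline{x}\right\vert ^{2r}
\leq \sum_{i=1}^{n}t_{i}f(x_{i})-f\Bigl(\sum_{i=1}^{n}t_{i}x_{i}\Bigr),
\]
which substituted into Theorem \ref{Th6}(B)--(C) produces the remaining two chains of inequalities.

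There is no genuine obstacle here, since all the analytic work was carried out in Theorem \ref{Th6}; the corollary is a bookkeeping step that couples the denominator bound of Theorem \ref{Th6} with the defining functional inequality of the class of $f$ under consideration. The only point requiring a brief verification is the monotonicity claim $x_{1,j}\geq \overline{x}$ and that $x_{1,j}$ decreases in $j$ under the ordering hypothesis, which follows from a one-line calculation: passing from $x_{1,j}$ to $x_{1,j+1}$ averages in the term $x_{j+1}$, which is no larger than the current mean $x_{1,j}$.
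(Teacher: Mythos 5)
Your proposal is correct and matches the paper's intended argument exactly: the paper offers no separate proof, stating only that the corollary follows from Theorem \ref{Th6} combined with Remark \ref{Rem1} (superquadratic case) and Remark \ref{Rem2} (uniformly convex case), which is precisely the substitution of the Jensen-type gap into the numerator that you carry out. Your one-line verification that $x_{1,j+1}=\bigl(jx_{1,j}+x_{j+1}\bigr)/(j+1)\leq x_{1,j}$ under the ordering hypothesis correctly fills in the monotonicity claim the paper leaves to the reader.
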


In the next theorem we prove using Jensen-Steffensen inequality, that we can
relax the conditions on the coefficients $t_{i},$ $i=1,...,n$ to get results
similar to Theorem \ref{Th6}. In this case the conditions on $t_{i}$ are:%
\begin{eqnarray}
0 &\leq &\sum_{i=1}^{j}t_{i}\leq \sum_{i=1}^{n}t_{i}=1,\quad j=1,...,n\quad
\label{2.11} \\
0 &\leq &\sum_{i=1}^{j}t_{i}\leq \sum_{i=1}^{k}t_{i},\quad j\leq k,\quad 
\notag \\
0 &<&t_{k+1},\quad 0\leq \sum_{i=k+1}^{l}t_{i}\leq \sum_{i=k+1}^{n}t_{i}, 
\notag
\end{eqnarray}%
and $y_{1}\leq y_{2},...,\leq y_{n}$.

\bigskip Jensen-Steffensen's inequality states that (see for instance \cite[%
inequalities 1.1 and 1.2]{ABMP}) if \ $\varphi :I\rightarrow \mathbb{R}$\ \
is convex, then 
\begin{equation}
\varphi \left( \frac{1}{P_{n}}\sum_{i=1}^{n}\rho _{i}\zeta _{i}\right) \leq 
\frac{1}{P_{n}}\sum_{i=1}^{n}\rho _{i}\varphi \left( \zeta _{i}\right)
\label{2.12}
\end{equation}%
holds, where \ $I$\ \ is an interval in \ $\mathbb{R},\ \boldsymbol{\zeta }%
=\left( \zeta _{1,...,}\zeta _{n}\right) $\ \ is any monotonic $n$-tuple in
\ $I^{n}$\ \ and \ $\boldsymbol{\rho }=\left( \rho _{1},...,\rho _{n}\right) 
$\ \ is a real $n$-tuple that satisfies

\begin{align}
0& \leq P_{j}\leq P_{n}\;,\quad j=1,...,n\;,\quad P_{n}>0\;,  \label{2.13} \\
P_{j}& =\sum_{i=1}^{j}\rho _{i}\;,\quad \overline{P}_{j}=\sum_{i=j}^{n}\rho
_{i}\;,\quad j=1,...,n\text{ }.  \notag
\end{align}

\begin{theorem}
\label{Th7} Let $y_{i},$ $i=1,...,n$\ be real numbers, $y_{1}\leq y_{2}\leq
,...,\leq y_{n}$, and $\sum_{i=1}^{n}t_{i}y_{i}=0$. Let $t_{i},$ $i=1,...,n$
satisfy (\ref{2.11}), and let $r\geq 1$ be a real number, then the
inequality:%
\begin{eqnarray}
&&  \label{2.14} \\
\left\vert \sum_{i=1}^{k}\frac{t_{i}y_{i}}{\sum_{v=1}^{k}t_{v}}\right\vert 
&\leq &\left( \frac{\sum_{i=1}^{n}t_{i}\left\vert y_{i}\right\vert ^{2r}}{%
\left( \sum_{i=1}^{k}t_{i}\right) +\left( \sum_{i=1}^{k}t_{i}\right)
^{2r}\left( \sum_{i=k+1}^{n}t_{i}\right) ^{1-2r}}\right) ^{\frac{1}{2r}}. 
\notag
\end{eqnarray}%
holds.
\end{theorem}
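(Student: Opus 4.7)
The plan is to mimic the two-block decomposition used in the proof of Theorem \ref{Th6}, but since the weights $t_i$ are no longer required to be positive, I will replace the classical Jensen inequalities appearing there by one application of Jensen--Steffensen (\ref{2.12})--(\ref{2.13}) on each of the two index blocks $\{1,\dots,k\}$ and $\{k+1,\dots,n\}$, with the convex function $\varphi(y)=|y|^{2r}$ (convex because $2r\geq 2$).

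First I would set $S_k=\sum_{i=1}^k t_i$, so $\sum_{i=k+1}^n t_i = 1-S_k$, and introduce the two block means
\[
\bar y_L = \frac{1}{S_k}\sum_{i=1}^k t_i y_i,\qquad \bar y_R = \frac{1}{1-S_k}\sum_{i=k+1}^n t_i y_i .
\]
The hypothesis $\sum_{i=1}^n t_i y_i=0$ gives the key balance relation $S_k\bar y_L + (1-S_k)\bar y_R = 0$, hence
\[
|\bar y_R|^{2r} = \frac{S_k^{2r}}{(1-S_k)^{2r}}\,|\bar y_L|^{2r}.
\]

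Next I would check that Jensen--Steffensen applies to each block. Since $y_1\leq y_2\leq\cdots\leq y_n$, the two subsequences $(y_i)_{i=1}^k$ and $(y_i)_{i=k+1}^n$ are each monotone. The partial-sum conditions in (\ref{2.11}) say exactly that $0\leq\sum_{i=1}^j t_i\leq S_k$ for $j\leq k$ and $0\leq\sum_{i=k+1}^l t_i\leq 1-S_k$ for $l\geq k+1$, with $t_{k+1}>0$ ensuring the right block has positive total weight. These are precisely (\ref{2.13}) for each block. Applying (\ref{2.12}) with $\varphi(y)=|y|^{2r}$ to each block therefore yields
\[
S_k\,|\bar y_L|^{2r}\leq \sum_{i=1}^k t_i |y_i|^{2r},\qquad (1-S_k)\,|\bar y_R|^{2r}\leq \sum_{i=k+1}^n t_i |y_i|^{2r}.
\]

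Finally I would add these two inequalities and substitute the balance relation for $|\bar y_R|^{2r}$ to obtain
\[
|\bar y_L|^{2r}\!\left[S_k + S_k^{2r}(1-S_k)^{1-2r}\right] \leq \sum_{i=1}^n t_i |y_i|^{2r},
\]
which is (\ref{2.14}) after dividing and taking the $2r$-th root. The main obstacle, and really the only substantive point, is justifying Jensen--Steffensen on each block from (\ref{2.11}); once that bookkeeping is done, the balance $S_k\bar y_L=-(1-S_k)\bar y_R$ makes the algebra collapse cleanly to the required denominator $S_k+S_k^{2r}(1-S_k)^{1-2r}$, without the two nested convexity steps (on $x^r$ and $x^2$) that were needed in Theorem \ref{Th6} because there only the raw weights, not the monotonicity of the $y_i$, were available.
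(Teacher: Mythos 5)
Your proof is correct, and although it keeps the paper's overall skeleton --- the split at index $k$, the balance relation $S_k\bar{y}_L=-(1-S_k)\bar{y}_R$ coming from $\sum_{i=1}^{n}t_iy_i=0$, and the same closing algebra producing the denominator $S_k+S_k^{2r}(1-S_k)^{1-2r}$ --- it differs from the paper's proof in one substantive and in fact advantageous way. The paper transplants the two nested convexity steps of Theorem \ref{Th6} verbatim: per block it first applies Jensen--Steffensen with the convex function $x^{r}$ to the tuple $\left( y_i^{2}\right)$, and then with $x^{2}$ to $\left( y_i\right)$. But Jensen--Steffensen, unlike ordinary Jensen, requires the data tuple to be monotone, and $\left( y_i^{2}\right)$ need not be monotone when $\left( y_i\right)$ is; with genuinely signed weights that first step can actually fail under the hypotheses of Theorem \ref{Th7}. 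For instance, take $n=5$, $k=3$, $t=\left( \tfrac{1}{2},-\tfrac{1}{2},\tfrac{1}{2},\tfrac{1}{2},0\right)$ and $y=\left( -1,0,\tfrac{1}{2},\tfrac{1}{2},\tfrac{1}{2}\right)$: all conditions in (\ref{2.11}) and $\sum_{i=1}^{n}t_iy_i=0$ hold, yet on the left block the weighted mean of $y_i^{2}$ is $\tfrac{5}{4}$ while the weighted mean of $y_i^{4}$ is $\tfrac{17}{16}<\left( \tfrac{5}{4}\right) ^{2}$, so the first displayed inequality in the paper's proof is violated for $r=2$ (the theorem's conclusion still holds in this example, as your argument guarantees). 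Your single application of Jensen--Steffensen per block with $\varphi \left( y\right) =\left\vert y\right\vert ^{2r}$ needs only the monotonicity of $\left( y_i\right)$ itself, which is a hypothesis, so your route is not merely stylistic: it repairs exactly the step where porting the chain of Theorem \ref{Th6} (where all $t_i>0$, so plain Jensen imposes no monotonicity requirement) to Jensen--Steffensen weights breaks down --- precisely the point you flagged as the only substantive obstacle. One small item to make explicit: the left block also needs $\sum_{i=1}^{k}t_i>0$ for the condition $P_n>0$ in (\ref{2.13}); this is not literally contained in (\ref{2.11}), but it is forced by the statement of (\ref{2.14}), whose left-hand side divides by $\sum_{v=1}^{k}t_v$, while for the right block your observation that $t_{k+1}>0$ together with $t_{k+1}\leq \sum_{i=k+1}^{n}t_i$ gives $1-S_k>0$ is exactly right.
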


\begin{proof}
\bigskip

\begin{equation*}
\sum_{i=1}^{n}t_{i}\left\vert y_{i}\right\vert
^{2r}=\sum_{i=1}^{n}t_{i}\left( \left\vert y_{i}\right\vert ^{2}\right)
^{r}=\sum_{i=1}^{n}t_{i}\left( y_{i}^{2}\right) ^{r}\qquad \qquad \qquad
\qquad \qquad \qquad \qquad
\end{equation*}%
\begin{equation*}
=\left( \sum_{i=1}^{k}t_{i}\right) \left( \sum_{i=1}^{k}\frac{t_{i}\left(
y_{i}^{2}\right) ^{r}}{\sum_{v=1}^{k}t_{v}}\right) +\left(
\sum_{i=k+1}^{n}t_{i}\right) \left( \sum_{i=k+1}^{n}\frac{t_{i}\left(
y_{i}^{2}\right) ^{r}}{\sum_{v=k+1}^{n}t_{v}}\right)
\end{equation*}%
\begin{equation*}
\geq \left( \sum_{i=1}^{k}t_{i}\right) \left( \sum_{i=1}^{k}\frac{%
t_{i}y_{i}^{2}}{\sum_{v=1}^{k}t_{v}}\right) ^{r}+\left(
\sum_{i=k+1}^{n}t_{i}\right) \left( \sum_{i=k+1}^{n}\frac{t_{i}y_{i}^{2}}{%
\sum_{v=k+1}^{n}t_{v}}\right) ^{r}
\end{equation*}%
\begin{equation*}
\geq \left( \sum_{i=1}^{k}t_{i}\right) \left( \sum_{i=1}^{k}\left( \frac{%
t_{i}y_{i}}{\sum_{v=1}^{k}t_{v}}\right) ^{2}\right) ^{r}+\left(
\sum_{i=k+1}^{n}t_{i}\right) \left( \sum_{i=k+1}^{n}\left( \frac{t_{i}y_{i}}{%
\sum_{v=k+1}^{n}t_{v}}\right) ^{2}\right) ^{r}
\end{equation*}%
\begin{equation*}
=\left( \sum_{i=1}^{k}t_{i}\right) \left( \left( \sum_{i=1}^{k}\frac{%
t_{i}y_{i}}{\sum_{v=1}^{k}t_{v}}\right) ^{2}\right) ^{r}+\left(
\sum_{i=k+1}^{n}t_{i}\right) \left( \left( -\sum_{i=k+1}^{n}\frac{t_{i}y_{i}%
}{\sum_{v=k+1}^{n}t_{v}}\right) ^{2}\right) ^{r}
\end{equation*}%
\begin{equation*}
=\left( \sum_{i=1}^{k}t_{i}\right) \left( \left( \sum_{i=1}^{k}\frac{%
t_{i}y_{i}}{\sum_{v=1}^{k}t_{v}}\right) ^{2}\right) ^{r}\qquad \qquad \qquad
\qquad \qquad \qquad
\end{equation*}%
\begin{eqnarray*}
&&+\left( \sum_{i=k+1}^{n}t_{i}\right) ^{1-2r}\left(
\sum_{i=1}^{k}t_{i}\right) ^{2r}\left( \left( \sum_{i=1}^{k}\frac{t_{i}y_{i}%
}{\sum_{v=1}^{k}t_{v}}\right) ^{2}\right) ^{r} \\
&=&\left( \left( \sum_{i=1}^{k}\frac{t_{i}y_{i}}{\sum_{v=1}^{k}t_{i}}\right)
^{2}\right) ^{r}\left[ \left( \sum_{i=1}^{k}t_{i}\right) +\left(
\sum_{i=1}^{k}t_{i}\right) ^{2r}\left( \sum_{i=k+1}^{n}t_{i}\right) ^{1-2r}%
\right] ,
\end{eqnarray*}%
The first inequality results from convexity of $x^{r},$ $r\geq 1,$ $x\geq 0$%
, and the second inequality from the convexity of $x^{2}$. In both
inequalities we took into consideration the special property of the number $%
k $ as appears in (\ref{2.11}), and Jensen-Steffensen inequality with
coefficients satisfying (\ref{2.13}) - what is called Jensen-Steffensen's
type coefficients.

Hence, 
\begin{eqnarray*}
&&\sum_{i=1}^{n}t_{i}\left\vert y_{i}\right\vert ^{2r} \\
&\geq &\left\vert \sum_{i=1}^{k}\frac{t_{i}y_{i}}{\sum_{v=1}^{k}t_{v}}%
\right\vert ^{2r}\left[ \left( \sum_{i=1}^{k}t_{i}\right) +\left(
\sum_{i=1}^{k}t_{i}\right) ^{2r}\left( \sum_{i=k+1}^{n}t_{i}\right) ^{1-2r}%
\right] ,
\end{eqnarray*}%
that is, 
\begin{equation*}
\left\vert \sum_{i=1}^{k}\frac{t_{i}y_{i}}{\sum_{v=1}^{k}t_{v}}\right\vert
^{2r}\leq \frac{\sum_{i=1}^{n}t_{i}\left\vert y_{i}\right\vert ^{2r}}{\left[
\left( \sum_{i=1}^{k}t_{i}\right) +\left( \sum_{i=1}^{k}t_{i}\right)
^{2r}\left( \sum_{i=k+1}^{n}t_{i}\right) ^{1-2r}\right] },
\end{equation*}%
from which (\ref{2.14}) follows, and the proof of the theorem is complete.
\end{proof}

Jensen-Steffensen's inequality states that (see for instance \cite[Theorem 1]%
{ABMP}) if \ $\varphi :I\rightarrow \mathbb{R}$\ \ is superquadratic and $%
\varphi ^{^{\prime }}$ is superadditive, then 
\begin{equation}
\frac{1}{P_{n}}\sum_{i=1}^{n}\rho _{i}\varphi \left( \zeta _{i}\right) \geq
\varphi \left( \frac{1}{P_{n}}\sum_{i=1}^{n}\rho _{i}\zeta _{i}\right) +%
\frac{1}{P_{n}}\sum_{i=1}^{n}\rho _{i}\varphi \left( \left\vert \zeta _{i}-%
\frac{1}{P_{n}}\sum_{i=1}^{n}\rho _{i}\varphi \left( \zeta _{i}\right)
\right\vert \right)  \label{2.15}
\end{equation}%
holds, where \ $I$\ \ is an interval in \ $\mathbb{R}_{+},\ \boldsymbol{%
\zeta }=\left( \zeta _{1,...,}\zeta _{n}\right) $\ \ is any monotonic $n$%
-tuple in \ $I^{n}$\ \ and \ $\boldsymbol{\rho }=\left( \rho _{1},...,\rho
_{n}\right) $\ \ is a real $n$-tuple that satisfies

\begin{align}
0& \leq P_{j}\leq P_{n}\;,\quad j=1,...,n\;,\quad P_{n}>0\;,  \label{2.16} \\
P_{j}& =\sum_{i=1}^{j}\rho _{i}\;,\quad \overline{P}_{j}=\sum_{i=j}^{n}\rho
_{i}\;,\quad j=1,...,n\text{ }.  \notag
\end{align}

\begin{corollary}
\label{Cor2} \textbf{a}. If $f\ $is the superquadratic funnction $f\left(
x\right) =x^{2r},$ $x\in \left[ 0,b\right) ,$ and $r\geq 1,$\ where $%
\overline{x}=\sum_{i=1}^{n}t_{i}x_{i}$, $x_{1,k}=\sum_{i=1}^{k}\frac{x_{i}}{%
\sum_{\nu =1}^{k}t_{v}},$ $t_{i}$ satisfies (\ref{2.11})$,$\ $%
\sum_{i=1}^{n}t_{i}=1$,\ $y_{i}=x_{i}-\overline{x},$ $i=1,...n$,\ then it
follows that%
\begin{eqnarray*}
\left\vert x_{1,k}-\overline{x}\right\vert  &=&\left\vert \sum_{i=1}^{k}%
\frac{t_{i}y_{i}}{\sum_{v=1}^{k}t_{v}}\right\vert  \\
&\leq &\left( \frac{\sum_{i=1}^{n}t_{i}\left\vert y_{i}\right\vert ^{2r}}{%
\left( \sum_{i=1}^{k}t_{i}\right) +\left( \sum_{i=1}^{k}t_{i}\right)
^{2r}\left( \sum_{i=k+1}^{n}t_{i}\right) ^{1-2r}}\right) ^{\frac{1}{2r}} \\
. &\leq &\left( \frac{\sum_{i=1}^{n}t_{i}x_{i}^{2r}-\left(
\sum_{i=1}^{n}t_{i}x_{i}\right) ^{2r}}{\left( \sum_{i=1}^{k}t_{i}\right)
+\left( \sum_{i=1}^{k}t_{i}\right) ^{2r}\left( \sum_{i=k+1}^{n}t_{i}\right)
^{1-2r}}\right) ^{\frac{1}{2r}}
\end{eqnarray*}

\textbf{b.} \ If $f:\left[ a,b\right) \rightarrow 
%TCIMACRO{\U{211d} }%
%BeginExpansion
\mathbb{R}
%EndExpansion
$ is uniformly convex with modulus $\Phi \left( x\right) =x^{2r}$, satisfing
the inequality $f^{^{\prime }}\left( y\right) -f^{^{\prime }}\left( x\right)
\geq sign\left( y-x\right) \Phi ^{^{\prime }}\left( \left\vert
y-x\right\vert \right) $ (see \cite[Theorem 6]{A1})\ similarly to case 
\textbf{a}, we get%
\begin{eqnarray*}
\left\vert x_{1,k}-\overline{x}\right\vert &=&\left\vert \sum_{i=1}^{k}\frac{%
t_{i}y_{i}}{\sum_{v=1}^{k}t_{v}}\right\vert \\
&\leq &\left( \frac{\sum_{i=1}^{n}t_{i}\left\vert y_{i}\right\vert ^{2r}}{%
\left( \sum_{i=1}^{k}t_{i}\right) +\left( \sum_{i=1}^{k}t_{i}\right)
^{2r}\left( \sum_{i=k+1}^{n}t_{i}\right) ^{1-2r}}\right) ^{\frac{1}{2r}} \\
. &\leq &\left( \frac{\sum_{i=1}^{n}t_{i}f\left( x_{i}\right) -f\left(
\sum_{i=1}^{n}\left( t_{i}x_{i}\right) \right) }{\left(
\sum_{i=1}^{k}t_{i}\right) +\left( \sum_{i=1}^{k}t_{i}\right) ^{2r}\left(
\sum_{i=k+1}^{n}t_{i}\right) ^{1-2r}}\right) ^{\frac{1}{2r}}
\end{eqnarray*}
\end{corollary}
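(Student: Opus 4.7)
My plan is to assemble the chain of inequalities in each part by combining Theorem~\ref{Th7} with a Jensen--Steffensen-type refinement. The leftmost inequality in both \textbf{a} and \textbf{b} follows at once from Theorem~\ref{Th7} applied to $y_{i}=x_{i}-\overline{x}$: the relation $\sum_{i=1}^{n}t_{i}y_{i}=0$ is automatic by definition of $\overline{x}$, the monotonicity of the $y_{i}$ is inherited from that of the $x_{i}$, the weights $t_{i}$ satisfy (\ref{2.11}), and $|x_{1,k}-\overline{x}|=\bigl|\sum_{i=1}^{k}t_{i}y_{i}/\sum_{v=1}^{k}t_{v}\bigr|$ is immediate.

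To obtain the rightmost inequality it then suffices to bound the numerator $\sum_{i=1}^{n}t_{i}|y_{i}|^{2r}$ from above. For case~\textbf{a} this is exactly the conclusion of the superquadratic Jensen--Steffensen inequality (\ref{2.15}) applied with $\varphi(x)=x^{2r}$, $\zeta_{i}=x_{i}$, $\rho_{i}=t_{i}$, $P_{n}=1$; the condition (\ref{2.16}) on the weights is precisely (\ref{2.11}), and $\varphi'(x)=2rx^{2r-1}$ is superadditive on $\mathbb{R}_{+}$ for $r\geq 1$ as required. Rearranging yields $\sum_{i}t_{i}|y_{i}|^{2r}\leq \sum_{i}t_{i}x_{i}^{2r}-\overline{x}^{2r}$, which plugged into the left inequality gives the stated bound.

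For case~\textbf{b} I would invoke the analogous Jensen--Steffensen inequality for uniformly convex $f$ with modulus $\Phi$: under the hypothesis $f'(y)-f'(x)\geq\operatorname{sign}(y-x)\Phi'(|y-x|)$ from \cite[Theorem~6]{A1}, the estimate $\sum_{i}t_{i}f(x_{i})\geq f(\overline{x})+\sum_{i}t_{i}\Phi(|x_{i}-\overline{x}|)$ holds for Jensen--Steffensen-type weights; specializing $\Phi(u)=u^{2r}$ gives exactly the required bound $\sum_{i}t_{i}|y_{i}|^{2r}\leq \sum_{i}t_{i}f(x_{i})-f(\overline{x})$.

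The main technical point — and the only step requiring care — is verifying that the monotonicity of $(y_{i})$ demanded by Theorem~\ref{Th7} is compatible with the direction of monotonicity of $(x_{i})$ needed by the Jensen--Steffensen inequality in (\ref{2.15}) and by its uniformly convex analogue from \cite{A1}. Since the translation $y_{i}=x_{i}-\overline{x}$ preserves monotonicity and the Jensen--Steffensen weight condition (\ref{2.16}) is identical to (\ref{2.11}), this compatibility is routine once stated explicitly, and the two inequality chains then close immediately.
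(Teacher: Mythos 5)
Your proof is correct and takes essentially the same route as the paper, which states Corollary \ref{Cor2} without a separate proof precisely because it is the immediate combination you describe: Theorem \ref{Th7} gives the first inequality, the superquadratic Jensen--Steffensen inequality (\ref{2.15}) with $\varphi(x)=x^{2r}$ gives the second inequality in part \textbf{a}, and the uniformly convex analogue from \cite[Theorem 6]{A1} gives it in part \textbf{b}. Your explicit checks (superadditivity of $\varphi'(x)=2rx^{2r-1}$ for $r\geq 1$, preservation of monotonicity under $y_{i}=x_{i}-\overline{x}$, and the identification of (\ref{2.11}) with the weight condition (\ref{2.16})) simply make precise what the paper leaves implicit.
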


We finish with an example:

\begin{example}
\label{Ex2} Choosing 
\begin{equation*}
t_{1}=\frac{1}{2},\quad t_{2}=-\frac{1}{2},\quad t_{3}=\frac{1}{2},\quad
t_{4}=\frac{1}{4},\quad t_{5}=-\frac{1}{4},\quad t_{6}=\frac{1}{2}
\end{equation*}%
and $k=3$. Also choosing $y_{i},$ $i=1,...,6$ as 
\begin{equation*}
y_{1}=-\frac{13}{2},\quad y_{2}=2,\quad y_{3}=3,\quad y_{4}=4,\quad
y_{5}=5,\quad y_{6}=6
\end{equation*}%
It is easy to verify that the conditions and therefore the results of
Theorem \ref{Th7} are satisfied.
\end{example}

\bigskip

\bigskip

\end{document}